\newif\ifcomments
\newcommand\comment[1]{%
  \ifcomments
  #1
  \else
  \fi}
\definecolor{mycolor}{RGB}{20, 20, 122}
\numberwithin{equation}{section}
\theoremstyle{plain}
\newtheorem{remark}{Remark}
\newtheorem{theorem}{Theorem}
\newtheorem{lemma}[theorem]{Lemma}
\newtheorem{corollary}[theorem]{Corollary}
\newtheorem{proposition}[theorem]{Proposition}
\theoremstyle{definition}
\newtheorem{definition}{Definition}
\def\R{{\mathbb R}}
\let\on=\operatorname
\newcommand{\ud}{\,\mathrm{d}}
\newtheorem{rem}{Remark}
\begin{document}
\title[Second order models for optimal transport]{Second order models for optimal transport and cubic splines on the Wasserstein space}
\author{Jean-David Benamou}
\address{INRIA, Project team Mokaplan \\   Universit\'e Paris-Dauphine, PSL Research University, Ceremade  } 
\email{jean-david.benamou@inria.fr}
\author{Thomas O. Gallou\"et} 
\address{INRIA, Project team Mokaplan   \\  Universit\'e Paris-Dauphine, PSL Research University, Ceremade}
\email{thomas.gallouet@inria.fr}
\author{Fran\c{c}ois-Xavier Vialard}
\address{Universit\'e Paris-Dauphine, PSL Research University, Ceremade \\ INRIA, Project team Mokaplan}
\email{fxvialard@normalesup.org}

\maketitle

\begin{abstract}
On the space of probability densities, we extend the Wasserstein geodesics to the case of higher-order interpolation such as cubic spline interpolation. 
After presenting the natural extension of cubic splines to the Wasserstein space, we propose a simpler approach based on the relaxation of the variational problem on the path space. 
We explore two different numerical approaches, one based on 
 multi-marginal optimal transport and entropic regularization and the other based on semi-discrete optimal transport. 
\end{abstract}

\section{Introduction}

We propose a variational method to generalize cubic splines on the space of densities using multimarginal optimal transport. In short, the proposed method consists in minimizing, on the space of measures on the path space, under marginal constraints, the norm squared of the acceleration.  
\comment{ In this setting,  we show that  two numerical approaches, classical in optimal transportation can be applied. One is based on  entropic regularization  and the Sinkhorn Algorithm, the other relies on the 
Semi-Discrete formulation of Optimal Transportation and the computation of Laguerre cells, a classical problem in computationnal geometry.}  We showcase our methodology on 1D and 2D data. 
\par
In the past few years, higher-order interpolations methods have been investigated for applications in computer vision or medical imaging, for time-sequence interpolation or regression. The most usual setting is when data are modeled as shapes, which can be understood as objects embedded in the Euclidean space with no preferred parametrization: space of unparametrized curves or surfaces, or images are some of the most important examples. These examples are infinite dimensional but the finite dimensional case of a Riemannian manifold was interesting for camera motion interpolation as first introduced in \cite{Noakes1} and further developed in \cite{splinesCk,Crouch}. 
Motivated by different applications, the problem of interpolation between two shapes is usually treated via the use of a Riemannian metric on the space of shapes and computing a geodesic between the two shapes. From a mathematical point of view, shape spaces are often infinite dimensional and thus, non-trivial analytical questions arise such as existence of minimizing geodesics or global well-posedness of the initial value problem associated with geodesics. A finite dimensional approximation is still possible such as in \cite{TrVi2010}, in which spline interpolation is proposed for a diffeomorphic group action on a finite dimensional manifold. It has been  extended for invariant higher-order lagrangians in \cite{HOSplines1,HOSplines2} on a group, still finite dimensional. A numerical implementation of the variational and shooting splines has been developed in \cite{SinghVN15} with applications to medical imaging. The question of existence of an extremum is not addressed in these publications. An attempt is given in \cite{Vialard2016} where the exact relaxation of the problem is computed in the case of the group of diffeomorphisms of the unit interval. In a similar direction, in \cite{WirthSplines}, the authors discuss the convergence of the discretization of cubic splines in some particular infinite dimensional Riemannian context on the space of shapes.
\par
As a shape space, we are interested in this article in probability measures endowed with the Wasserstein metric. Since the Wasserstein metric shares some similarities with a Riemannian metric on this space of probability densities, it is natural to study further higher-order models in this context. Our motivation is to answer the following practical question of the extension of cubic splines to the Wasserstein space and their numerical computation. \par

We present in Section \ref{SecCubicSplines} the notion of cubic splines on a Riemannian manifold and detail its variational formulation in Hamiltonian coordinates. We then discuss independently in Section \ref{SecHomogeneousSpace} a geometric approach to the Wasserstein space that will be useful for the introduction of our proposed method detailed in Section \ref{modelspline}. Finally in Sections \ref{multimarg}  we present the numerical entropic relaxation method and an alternative numerical method based on semi-discrete optimal transport. The reader not interested in geometric interpretation can skip directly to Section \ref{modelspline}. \par 

To the best of our knowledge, this question has not been yet addressed in the literature on optimal transport until very recently in two independant and simultaneous  preprints~:  \cite{georgiouspline} and  \cite{ourpreprint} (this paper). Both work share the same  idea of relaxing the cubic spline formulation in the space of measure using multi-marginal optimal transport. Our paper however explores a larger hierarchy of models  and several numerical methods. 

\section{Cubic splines on Riemannian manifolds}\label{SecCubicSplines}
In this section, we present Riemannian cubics, which are the extension of variational splines to a Riemannian manifold $(M,g)$ where $g$ is the Riemannian metric.
Variational cubic splines on a Riemannian manifold are the minimizers of the acceleration; that is, denoting $\frac{D}{Dt}$ the covariant derivative, minimization on the set of curves $x: [0,T] \to M$ of the functional
\begin{equation}
\mathcal{E}(x) = \int_0^1 g(x)\left(\frac{D}{Dt} \dot{x},\frac{D}{Dt}\dot{x}\right) \ud t\,,
\end{equation}
subject to constraints on the path such as constraints on the tangent space, $(x(t_i),\dot{x}(t_i))$ are prescribed for a collection of times $t_i \in [0,1]$, or constraints on the positions such as $x(t_i) = x_i$. 
\par 
Under mild conditions on the constraints, if $M$ is complete, minimizers exist, for instance in the case of constraints on the tangent space mentioned above. A pathological case where minimizers might not exist is when the initial speed is not prescribed.  Consider for instance the two dimensional torus, where lines of irrational slopes are dense, it is possible to show that for any collection of points which do not lie on a line, the infimum of $\mathcal{E}$ is $0$ while it is never reached, see \cite{WirthSplines}. 
The Euler-Lagrange equation associated to the functional $\mathcal{E}$ is 
\begin{equation}\label{EqEulerLagrangeEquationSplines}
\frac{D^3}{Dt^3}\dot{x} - R\left(\dot{x},\frac{D}{Dt}\dot{x}\right)\dot{x} = 0\,,
\end{equation}
where $R$ is the curvature tensor of the Riemannian manifold $M$. Note that this equation is similar to a Jacobi field equation.
\par
We now formulate the variational problem in coordinates. In a coordinate chart around a point $x(t) \in M$, the geodesic equations are given by
\begin{equation} \label{RiemannGeodesics}
\frac{D}{Dt} \dot{x} = \ddot{x} +  \Gamma(x)(\dot{x},\dot{x})  = 0 \,,
\end{equation}
where $\Gamma$ is a short notation for the Christoffel symbols associated with the Levi-Civita connection. It is a second-order differential equation which is conveniently written as a first-order differential equation, via the Hamiltonian formulation.
Again in local coordinates on $T^*M$ the cotangent bundle of $M$, 
the geodesic equation can be written as
\begin{equation}\label{HamiltonianGeodesics}
\begin{cases}
\dot{p} + \partial_x H  = 0\\
\dot{x} - \partial_p H = 0\,,
\end{cases}
\end{equation}
where $H(x,p) = \frac 12 g(x)^{-1}(p,p)$. Note that, the ODE \eqref{RiemannGeodesics} can be obtained from the Hamiltonian system using $\dot{x} = g(x)^{-1}p$. 
From these two equivalent formulations \eqref{RiemannGeodesics} and \eqref{HamiltonianGeodesics}, it can be shown that $g^{-1}(x)(\dot{p} + \partial_x H) = \frac{D}{Dt} \dot{x} $.
Therefore, it proves that the variational spline problem can be rewritten in Hamiltonian coordinates as follows 
\begin{equation*}
\inf_{u} \int_0^1 g(x)^{-1}(a,a) \ud t \,,
\end{equation*}
under the constraint
\begin{equation*}
\begin{cases}
\dot{x} - g(x)^{-1}p = 0\\
\dot{p}  +\partial_x H(x,p) = a\,,
\end{cases}
\end{equation*}
with initial conditions $x(0) = x_0$ and $p(0) = p_0$.
It is natural to ask whether such variational problems carry over in infinite dimensional situations such as the Wasserstein space, which will be discussed in the rest of the paper.

\section{A formal application of spline interpolation to the Wasserstein space}\label{SecHomogeneousSpace}

It is well known that the Hamiltonian formulation of geodesics on the Wasserstein space, define over a riemannian manifold $M$,  are
\begin{equation} \label{Geodesics}
\begin{cases}
\dot{\rho} + \nabla \cdot (\rho \nabla \phi) = 0 \\
\dot{\phi} + \frac{1}{2}|\nabla \phi|^2  = 0\, ,
\end{cases}
\end{equation}
where $\rho: M \mapsto \R_{\geq 0}$ and $\phi: M \mapsto \R$  {\em implicitly   time dependant }   are respectively a probability density and a function. Note that these equations are valid when working with smooth densities. The Hamiltonian is the following,
\begin{equation} \label{Hamiltonian}
H(\rho,\phi) =  \frac{1}{2}  \int_M |\nabla \phi|^2 \rho \, d\mu_0 \,,
\end{equation}
where $\mu_0$ is a reference measure on $M$.

\begin{rem}
Taking the gradient of the equation governing $\phi$, and denoting $v=\nabla \phi$, we get Burger's equation:
\begin{equation} \label{Burger}
\dot{v} + (v,\nabla)v = 0 \,,
\end{equation}
where in coordinates, the operator $(v,\nabla)$ is defined as $(v,\nabla) w \doteq \sum_{i=1}^n v_i \nabla w_i$ where $v,w$ are vector fields and $n$ is the dimension of the $M$. In Lagrangian coordinates, this equation implies that
\begin{equation}
\ddot{\varphi} = 0 \,,
\end{equation}
where $\varphi (t) :$ $M\mapsto M$ is the Lagrangian flow associated with $v$ ($\dot{\varphi} = v \circ \varphi$), which is well-defined under sufficient regularity conditions.
\end{rem}

\begin{rem}
For the Wasserstein case, the operator is given by $g(\rho)^{-1}\phi = - \nabla \cdot [\rho \nabla \phi] $ so that the (formal) computation of the covariant derivative $\frac{D}{Dt} \dot{\rho}$ on the Wasserstein space is:
\begin{equation} \label{WassersteinConnection}
\frac{D}{Dt} \dot{\rho}= - \nabla \cdot [\rho \, (v + (v,\nabla)v)]\, ,
\end{equation}
where $v = \nabla \phi $ is the horizontal lift associated with $\dot{\rho}$, that is $\dot{\rho} + \nabla \cdot (\rho \nabla \phi) = 0 $. This result is proven rigorously in \cite{lott2008some}.
\end{rem}

From a control viewpoint, we aim at minimizing $\frac{1}{2}\int_0^1H(\rho,a) \, dt$ for the control system:
\begin{equation} \label{ControlledSystem}
\begin{cases}
\dot{\rho} + \nabla \cdot (\rho \nabla \phi) = 0 \\
\dot{\phi} + \frac{1}{2}|\nabla \phi|^2  = a\, ,\\
\end{cases}
\end{equation}
where $a$ is a time dependent function defined on $M$.
Alternatively, in terms of the variables $(\rho,\phi)$, this amounts to minimize
\begin{equation}\label{EqFirstProblem}
\int_0^1 \int_M | \nabla [\dot{\phi} + \frac{1}{2}|\nabla \phi|^2] |^2 \rho \, \ud \mu_0 \, \ud t \,,
\end{equation}
under the continuity equation constraint $\dot{\rho} + \nabla \cdot (\rho \nabla \phi) = 0 $.
It is a nonconvex optimization problem in the couple $(\rho,\phi)$. The key issue here is that the variational problem itself is a priori not well-posed since our formulation is valid in a smooth setting and to make it rigorous on the space of measures, the tight relaxation of this problem is needed. However, we do not address this issue in our work and in the next section we turn our attention to a simple relaxation of the problem which is probably not   tight.
\section{A hierarchy of relaxed models}\label{modelspline}

\subsection{Context} 

We recall the classical optimal transport setting. We have the following well known equivalence \cite{OttoPorousMedium,VilON}
\begin{equation}
\label{equalitystandart}
\begin{array}{ll} 

W_2^2\left(\rho_0,\rho_1\right) & \displaystyle  = \inf_{\varphi  }\int_0^1 \int_M | \dot{\varphi}|^2 \ud \mu_0 \ud t = 
\inf_{\rho ,v }\int_0^1 \int_M | v |^2 \ud \rho   \ud t \\[10pt]  & = \displaystyle  
\inf_{\rho }\int_0^1  \inf_{v } \int_M | v |^2 \ud  \rho   \ud t
=\inf_{\rho ,\nabla \phi  }\int_0^1   \int_M | {\nabla \phi  }|^2 \ud \rho   \ud t
\end{array} 
\end{equation}
Under  constraints that  $$  [ \varphi (t) ]_* \mu_0=\rho(t) \mbox{ for  }t=0,1$$ 
($   [ \varphi (t) ]_* \mu_0$  is the image measure of $\mu_0$~:  
$\int_M f(y)\ud   [ \varphi (t) ]_* \mu_0 (y)  = \int  f(T(x)) \ud \mu(x)$ for every measurable function $f: M \to \R$ ) \\
and  the continuity equation 
$$\dot{\rho  }+\nabla \cdot (\rho  v )=\dot{\rho} +\nabla \cdot (\rho  \nabla \phi )=0$$ 
with fixed initial and final conditions 
$$ \rho(0) = \rho_0  \mbox{ and }  \rho(1) = \rho_1. $$

Moreover, geodesics in the space of densities for the Wasserstein metric are given by \\
 $[ \varphi(t) ]_* \mu_0= \rho(t)$ and the associated displacement maps satisfy  $v \circ \varphi = \dot{\varphi }$. \\
 
 The last equality in \eqref{equalitystandart} exactly says that the  infimum  $\inf_{v(t)} \int_M | v(t) |^2 \ud  \rho(t)$ among all $v(t)$ satisfying the continuity equation at each time $t$ is achieved when $v(t)$ is a gradient.   This property is a consequence of a Riemannian submersion and $\nabla \phi$ is called the horizontal lift of $\dot{\rho}$. It is this last formulation that formally gives a Riemannian structure on the space of probability measures. See the remark \ref{gs} below for more details on the geometrical structure. \\
 
For higher-order variational problems, e.g. the minimization of the acceleration, the reduction in the last inequality does not holds true in general, even if the Riemannian submersion structure is present as shown in \cite{HOSplines2}.
 It means in the case of acceleration that, a priori, with the same constraint as for \eqref{equalitystandart}~: 
 \begin{equation}\label{equalitystandart2}
 \begin{array}{ll} 
\displaystyle \inf_{\varphi } \int_0^1 \int_M | \ddot{\varphi }|^2 \ud \mu_0 \ud t    & = 
\displaystyle \inf_{\rho ,v }\int_0^1 \int_M |\dot{v }+ (v ,\nabla)v |^2 \ud \rho    \ud t \\[10pt] &  \neq 
 \displaystyle \inf_{\rho ,\nabla \phi  }\int_0^1   \int_M | {\dot{\phi} +(\nabla \phi ,\nabla)\nabla \phi }|^2 \ud \rho  \ud t , 
 \end{array} 
\end{equation}
where we have used that $\ddot{\varphi }=\dot{v }\circ \varphi  + (v \circ \varphi ,\nabla)v \circ \varphi  $. \\

 
 \begin{remark}
 \label{gs}
 From a geometrical point of view, \eqref{equalitystandart} says the Wasserstein space can be seen, at least formally, as a homogeneous space as described in \cite[Appendix 5]{khesin2008geometry} and originally in \cite{OttoPorousMedium}.
  Consider the group of (smooth) diffeomorphisms of $M$ a closed manifold, $\on{Diff}(M)$, and the space of (smooth) probability densities $\on{Dens}(M)$.
   The space of densities is endowed with a $\on{Diff}(M)$ action defined by the pushforward, that is to a given $\varphi \in \on{Diff}(M)$ and $\rho \in \on{Dens}(M)$, the pushforward of $\rho$ by $\varphi$ is $\on{Jac}(\varphi^{-1})\rho \circ \varphi^{-1}$.
    By Moser's lemma, this action is transitive, thus making the space of densities as a homogeneous space.
   More importantly, there exists a compatible Riemannian structure between $\on{Diff}(M)$ and $\on{Dens}(M)$.
   Once having chosen a reference density $\mu_0$, the $L^2(M,\mu_0)$ metric on the diffeomorphism group descends to the Wasserstein $L^2$ metric on the space of densities, or in other words, the pushforward action $\varphi \mapsto \varphi_*\mu_0$ is a Riemannian submersion.
    An important property of Riemannian submersion is that geodesics on $\on{Dens}(M)$ are in correspondence with geodesics on the group, given by horizontal lift.
     This property is actually contained in Brenier's polar factorization theorem, which shows that the horizontal lift is the gradient of a convex function.
     
  \end{remark}   
  
  \subsection{The Monge formulation}
  
      In Section \ref{SecHomogeneousSpace} we used the formal Riemannian structure on the set of probability measure to define an intrinsic notion of splines,  \eqref{EqFirstProblem} is indeed  the RHS of inequality \eqref{equalitystandart2}. 
In this section we propose a simpler  alternative definition of Wasserstein splines based on the LHS of inequality \eqref{equalitystandart2}.


\begin{definition}[Monge formulation]\label{Mongeformulation}
Let $0 = t_0 < \ldots < t_n =1$, $n \geq 2$ and $\rho_1, \ldots, \rho_n $ be $n$ probability measures on $M$.
\par
Minimize, among time dependent maps $\varphi(t): M \mapsto M$,  
\begin{equation}\label{EqSecondProblem}
\int_0^1 \int_M | \ddot{\varphi} |^2 \ud \mu_0 \ud t \,,
\end{equation}
under the marginal constraints $\varphi(t_i)_* \mu_0 = \rho_i$.
This minimizing problem is denoted by $(MS)$.
\end{definition}
It is a Monge formulation of the variational problem, similar to standard optimal transport. 
On a Riemannian manifold $M$, the notation $\ddot{\varphi}$ stands for $\frac{D}{Dt}{\dot{\varphi}}$. 
By the change of variable with the map $\varphi$, the problem can be written in Eulerian coordinates, that is using the vector field associated with the Lagrangian map $\varphi$, $\partial_t \varphi = v \circ \varphi$, one aims at minimizing for $(\rho,u)$ 
\begin{equation}
\int_0^1 \int_M |u|^2 \rho \ud \mu_0 \ud t\,
\end{equation}
under the constraints
\begin{equation}
\begin{cases}
\dot{\rho} + \on{div} (\rho v) = 0 \\
\dot{v} + (v,\nabla)v = u\,,
\end{cases}
\end{equation}
with the marginals constraints $\rho(t_i) = \rho_i$.
\begin{remark}
Remark that formally when $v = \nabla \phi$, this new model reduces to the formulation \eqref{EqFirstProblem}. Therefore, it justifies the fact that Problem \eqref{EqSecondProblem} is a relaxation of \eqref{EqFirstProblem}. However, as already mentioned, this relaxation is probably not tight.
\par
Another formal geometric argument in the direction of proving that the two formulations are different is that the Wasserstein space has nonnegative curvature if the underlying space $M$ has nonnegative curvature, but the space of maps in the Euclidean space is flat. Therefore, the two Euler-Lagrange equations \eqref{EqEulerLagrangeEquationSplines} lead to a different evolution equations: for instance, if $M$ is the Euclidean space then the Euler-Lagrange equation for the second model is simply $\ddddot{\varphi} = 0$, which is a priori different from the splines Euler-Lagrange equation in the Wasserstein case.
\end{remark}
\subsection{The Kantorovich relaxation}
 Since, as is well-known in standard optimal transport, the Monge formulation is not well-posed for general given margins  $\rho_1, \ldots, \rho_n $, we propose  instead  to solve yet another relaxation of the problem on the space of curves which takes the form:

\begin{definition}[Kantorovich relaxation]\label{ThmKantorovich}
Let $0 = t_1 < \ldots < t_n =1$, $n \geq 3$ and $\rho_1, \ldots, \rho_n $ be $n$ probability measures on $M$.
\par
Minimize on the space of probability measures on the path space $H^2([0,1],M)$ denoted by $\mathcal{H}$ in short,
\begin{equation}\label{EqMultiMarginal}
\min_{\mu} \int_{\mathcal{H}} |\ddot{x}|^2 \ud \mu(x) \,,
\end{equation}
which is a linear functional  of  $ \ud \mu$.  The curves of densities is given by its marginals in time 
\begin{equation}\label{TimeMarginals}
t \mapsto  \rho(t) \mu_0 := [e_t]_*(\mu)\,,
\end{equation}
 $e_t$ is the evaluation function at time $t$~:
 if $\gamma \in H^2([0,1],M) \subset C^0([0,1],M)$ then $e_t(\gamma) = \gamma(t,.) \in M$.  \\
The notation $[e_t]_*\mu$ is the image measure by the map $e_t$ defined by duality~:\\ $\int_M f(y)\ud [e_t]_* \mu(y) = \int_{\mathcal{H}} f(e_t(x)) \ud \mu(x)$ for every measurable function $f: M \to \R$. 
Note that $x$ is a path on $[0,1]\times M$ while $y$ is a point on $M$.

With these notations, the marginal constraint at given time $t_i$ are 
\begin{equation}\label{EqMarginalConstraints}
[e_t]_*(\mu) = \rho_i \, \mu_0\,.
\end{equation}

\end{definition}
By standard arguments, the Kantorovich relaxation admits minimizers under general hypothesis on the manifold $M$, which we do not detail here. It is straightforward to check that existence of minimizers holds when $M = \R^d$.
\par
As expected, the Kantorovich formulation is the relaxation of the Monge formulation in Definition \ref{Mongeformulation}.
\begin{theorem}\label{ThmRelaxation}
Let $M = \R^d$, $0 = t_1 < \ldots < t_n =1$, $n \geq 3$ and $\rho_1, \ldots, \rho_n \in $ be $n$ probability measures on $\R^d$ with compact support 	and $\rho_1$ being atomless. Then, under the constraints \eqref{EqMarginalConstraints}, the infimums of the variational problem \eqref{EqSecondProblem} and \eqref{EqMultiMarginal} coincide, moreover, the infimum is attained for the latter.
\end{theorem}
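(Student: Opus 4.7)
The easy inequality (Kantorovich infimum bounded above by the Monge infimum) I would handle by lifting. Given a Monge-admissible $\varphi$, define $\Phi : \R^d \to \mathcal{H}$ by $\Phi(x) = \varphi(\cdot, x)$ and set $\mu := \Phi_* \mu_0$. The marginal identities $[e_{t_i}]_* \mu = [\varphi(t_i)]_* \mu_0 = \rho_i \mu_0$ hold by construction, and Fubini gives equality of costs, so $\mu$ is Kantorovich-admissible with the same cost as $\varphi$.

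For existence of a Kantorovich minimizer I would apply the direct method in $\mathcal{P}(\mathcal{H})$. On a minimizing sequence, boundedness of the cost together with the compact-support marginal constraints controls the $H^2$-norm of $\mu$-almost every path uniformly; this yields equicontinuity in $C^0$ and, via Ascoli--Prokhorov applied to the time-projections, tightness on path space. The functional $\mu \mapsto \int \|\ddot x\|_{L^2}^2 \ud \mu$ is narrowly lower semicontinuous, because $x \mapsto \|\ddot x\|_{L^2}^2$ is convex and l.s.c.\ on $C^0$ when extended by $+\infty$ outside $H^2$ (write it as a supremum of continuous affine functionals $\langle x, \ddot\phi\rangle$ against smooth test functions $\phi \in C_c^\infty((0,1))$).

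The harder converse inequality is where I would invest most of the effort. A key preliminary observation is that $\rho_1$ atomless forces $\mu_0$ atomless, since a pushforward cannot split atoms; otherwise no Monge-admissible map would exist and the statement would be vacuous. With $\mu_0$ atomless, the Borel isomorphism theorem identifies $(\R^d, \mu_0)$ with $([0,1], \mathrm{Leb})$, and the classical fact that any Borel probability on a Polish space is the image of Lebesgue measure under a Borel map (via a quantile transform and a measurable section) produces a Borel $F : \R^d \to \mathcal{H}$ with $F_* \mu_0 = \mu$, where $\mu$ is a Kantorovich minimizer. Setting $\varphi(t, x) := F(x)(t)$, joint $(t, x)$-measurability follows from continuity of each evaluation $e_t$ on $H^2$. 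The marginal constraints $[\varphi(t_i)]_* \mu_0 = [e_{t_i}]_* \mu = \rho_i \mu_0$ hold exactly, and Fubini matches the costs.

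The main obstacle I anticipate is this measurable selection step and its compatibility with the joint $(t, x)$-integration of the acceleration cost; continuity of each $e_t$ on $H^2$ together with Borel measurability of $F$ resolves it, but requires care. A secondary subtlety is reading the hypothesis on $\rho_1$ as forcing atomlessness of $\mu_0$, for which the pushforward-preserves-atoms observation suffices. Notice that the construction provides a Monge candidate attaining the Kantorovich minimum only in the weak sense of costs, not of attainment at the Monge level — consistent with the theorem asserting attainment solely for the relaxed problem.
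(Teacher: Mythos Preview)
Your argument is correct and takes a genuinely different route from the paper's. The paper first reduces both formulations on path space to multimarginal optimal transport on $(\R^d)^n$ with the continuous cubic-spline cost $c(x_1,\ldots,x_n)$ (Corollaries~\ref{ThmMultiMarginal} and~\ref{ThmMultiMarginalmonge}), and then proves in the appendix that plans in $\Pi(\rho_1,\ldots,\rho_n)$ are weak limits of plans of the form $(\mathrm{Id},T_2,\ldots,T_n)_*\rho_1$, via a partition-and-patch construction adapted from the two-marginal case. This density argument yields equality of infima without exhibiting a Monge minimizer.

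You instead stay on path space and use the measurable-selection fact that any probability $\mu$ on the Polish space $\mathcal{H}$ is the pushforward of an atomless $\mu_0$ under some Borel $F$. The resulting $\varphi(t,x)=F(x)(t)$ is Monge-admissible with exactly the Kantorovich optimal cost, so your argument in fact shows that the Monge infimum of Definition~\ref{Mongeformulation} is \emph{attained} --- stronger than the theorem asserts. This works precisely because Definition~\ref{Mongeformulation} does not force $\varphi(t_1)=\mathrm{Id}$: your $\varphi(t_1)$ is typically a non-injective measure-preserving rearrangement of $\mu_0$. The paper's route, by contrast, handles the more restrictive multimarginal Monge problem where the source is $\rho_1$ and the first map is the identity; there the infimum still matches Kantorovich but attainment can genuinely fail (Proposition~\ref{ThCounterExample}). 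So your argument is shorter and gives a free bonus for the problem as literally posed, while the paper's buys a sharper statement about the underlying multimarginal structure. Your closing hedge (``only in the weak sense of costs, not of attainment at the Monge level'') is therefore unwarranted: the $\varphi$ you build is a bona fide Monge minimizer.
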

\begin{proof}
See the proof of a more general result in Appendix \ref{Appendix}.
\end{proof}

First we remark that we can reformulate both the Monge and Kantorovich problems on the set of cubic splines. 
It is the purpose of the following lemmas and corollaries, whose proofs are straightforward. 
\begin{definition}[Cubic interpolant]
Let $(x_1,\ldots,x_n) \in \R^d$ be $n$ given points and $(t_1< \ldots<t_n)$ be $n$ timepoints. 
There exists a unique cubic spline minimizing the acceleration of the curve $x(t)$ such that $x(t_i) = x_i$. 
This unique curve is called cubic interpolant and is denoted by $c_{x_1,\ldots,x_n}$, depending implicitly on the timepoints.
\end{definition}

\begin{lemma}\label{ThmReduction}
When the supports of the measures $\rho_i$ are compact on $\R^d$, 
the support of every minimizing $\mu$ in Definition \ref{ThmKantorovich} is included in the set the cubic interpolants $c_{x_1,\ldots,x_n}$ for $(x_1,\ldots,x_n) \in \on{Supp}(\rho_1) \times \ldots \times \on{Supp}(\rho_n)$. 
\end{lemma}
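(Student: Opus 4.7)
The plan is to use disintegration against the multi-evaluation map and exploit the defining uniqueness of the cubic interpolant through $n$ prescribed points.

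First I would introduce the joint evaluation map $E : \mathcal{H} \to (\R^d)^n$, $E(\gamma) = (\gamma(t_1),\ldots,\gamma(t_n))$, and set $\pi := E_*\mu$. The marginal constraints \eqref{EqMarginalConstraints} say precisely that $\pi$ has marginals $\rho_1\mu_0,\ldots,\rho_n\mu_0$. Using that $\mathcal{H}$ is a Polish space and $E$ is Borel, I would disintegrate
\begin{equation*}
\mu \;=\; \int_{(\R^d)^n} \mu_{x_1,\ldots,x_n}\,\ud\pi(x_1,\ldots,x_n),
\end{equation*}
where each $\mu_{x_1,\ldots,x_n}$ is a probability measure concentrated on the Borel set $E^{-1}(x_1,\ldots,x_n)$ of curves in $\mathcal{H}$ passing through the prescribed points at the prescribed times.

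Next I would rewrite the cost using Fubini:
\begin{equation*}
\int_{\mathcal{H}} |\ddot{x}|^2 \,\ud\mu(x) \;=\; \int_{(\R^d)^n} \left( \int_{\mathcal{H}} |\ddot{x}|^2 \,\ud\mu_{x_1,\ldots,x_n}(x) \right) \ud\pi(x_1,\ldots,x_n).
\end{equation*}
For each fixed tuple $(x_1,\ldots,x_n)$, the inner integral is bounded below by $\int_0^1 |\ddot c_{x_1,\ldots,x_n}|^2\,\ud t$, with equality if and only if $\mu_{x_1,\ldots,x_n}=\delta_{c_{x_1,\ldots,x_n}}$, by uniqueness of the cubic interpolant through those interpolation data. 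Define the competitor
\begin{equation*}
\tilde\mu \;:=\; \int_{(\R^d)^n} \delta_{c_{x_1,\ldots,x_n}}\,\ud\pi(x_1,\ldots,x_n).
\end{equation*}
Because every cubic interpolant in the support of $\tilde\mu$ passes through $(x_1,\ldots,x_n)$ at the prescribed times, we have $[e_{t_i}]_*\tilde\mu = [e_{t_i}]_*\mu = \rho_i\mu_0$, so $\tilde\mu$ is admissible.

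Comparing costs,
\begin{equation*}
\int_{\mathcal{H}}|\ddot x|^2\,\ud\tilde\mu \;=\; \int_{(\R^d)^n}\int_0^1 |\ddot c_{x_1,\ldots,x_n}|^2\,\ud t\,\ud\pi \;\leq\; \int_{\mathcal{H}}|\ddot x|^2\,\ud\mu,
\end{equation*}
with equality forcing, for $\pi$-almost every $(x_1,\ldots,x_n)$, the inner integral to equal $\int_0^1|\ddot c_{x_1,\ldots,x_n}|^2\,\ud t$; the uniqueness statement then gives $\mu_{x_1,\ldots,x_n}=\delta_{c_{x_1,\ldots,x_n}}$ $\pi$-a.e. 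Reassembling via the disintegration yields $\mu=\tilde\mu$, which is supported on $\{c_{x_1,\ldots,x_n} : (x_1,\ldots,x_n)\in\prod_i\on{Supp}(\rho_i)\}$, as claimed.

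The only mildly delicate point is the measurable selection used implicitly when writing $(x_1,\ldots,x_n)\mapsto c_{x_1,\ldots,x_n}$ as a Borel map into $\mathcal{H}$, but this follows from the closed-form linear dependence of the cubic spline on its interpolation data; everything else is the standard Kantorovich-style relaxation plus uniqueness argument.
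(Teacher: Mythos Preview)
Your argument is correct and follows essentially the same idea as the paper's proof: the marginal constraints force $\mu$-a.e.\ path to pass through $\on{Supp}(\rho_i)$ at time $t_i$, and any such path can be replaced by the unique cubic interpolant through its time-$t_i$ values without increasing the cost. The paper states this in two informal sentences; you have supplied the rigorous measure-theoretic scaffolding via disintegration over the evaluation map $E$, construction of the competitor $\tilde\mu$, and the equality case yielding $\mu=\tilde\mu$, which is a welcome level of detail but not a different route.
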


\begin{proof}
The constraints are the marginal constraints $[e_{t_i}]_*(\mu) = \rho_i$ for $i \geq 3$ which implies that set of paths charged by an optimal measures satisfies $x(t_i) \in \on{Supp}(\rho_i)$. 
In particular, any path in this set can be replaced by its minimal spline energy, the cubic interpolant $c_{x_1,\ldots,x_n}$.
\end{proof}

\begin{corollary}
As a consequence, the set of paths charged by an optimal plan are uniformly $C^2$ and for every smooth function $\eta: \R^d \mapsto \R$ with compact support, the map $t \mapsto \langle \mu(t), \eta \rangle$ is $C^2$.
\end{corollary}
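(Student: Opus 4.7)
The plan is to deduce everything directly from the reduction Lemma. By that lemma, any path $x(\cdot)$ in the support of an optimal $\mu$ coincides with the cubic interpolant $c_{x_1,\ldots,x_n}$ associated to its sampled values $x_i = x(t_i) \in \on{Supp}(\rho_i)$. Since the acceleration-minimizing interpolant with prescribed values at $t_1,\ldots,t_n$ and free endpoint derivatives is the classical natural cubic spline, each such path is automatically piecewise polynomial of degree $\le 3$ and globally $C^2$ on $[0,1]$.

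The next step is to upgrade ``each path is $C^2$'' to a \emph{uniform} $C^2$ bound. The map
\begin{equation*}
\Phi: (x_1,\ldots,x_n) \in (\R^d)^n \longmapsto c_{x_1,\ldots,x_n} \in C^2([0,1],\R^d)
\end{equation*}
is linear in the knot values (the spline coefficients are obtained by solving a fixed tridiagonal linear system determined by $t_1,\ldots,t_n$), hence continuous. The hypothesis that each $\rho_i$ is compactly supported implies that $K := \on{Supp}(\rho_1) \times \cdots \times \on{Supp}(\rho_n)$ is compact in $(\R^d)^n$, so $\Phi(K)$ is compact in $C^2([0,1],\R^d)$. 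Therefore
\begin{equation*}
C := \sup_{(x_1,\ldots,x_n)\in K} \Bigl(\|c_{x_1,\ldots,x_n}\|_\infty + \|\dot c_{x_1,\ldots,x_n}\|_\infty + \|\ddot c_{x_1,\ldots,x_n}\|_\infty\Bigr) < \infty,
\end{equation*}
which is exactly the uniform $C^2$ bound on the paths charged by any optimal $\mu$.

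For the second assertion, I would unfold the definition of the time marginal: for every $\eta \in C_c^\infty(\R^d)$,
\begin{equation*}
\langle \mu(t),\eta\rangle = \int_{\mathcal{H}} \eta(e_t(x))\,\ud \mu(x) = \int_{\mathcal{H}} \eta\bigl(x(t)\bigr)\,\ud \mu(x).
\end{equation*}
For $\mu$-a.e.\ path $x$, the integrand $t\mapsto \eta(x(t))$ is $C^2$, with first and second derivatives
\begin{equation*}
\frac{d}{dt}\eta(x(t)) = \nabla\eta(x(t))\cdot \dot x(t), \qquad \frac{d^2}{dt^2}\eta(x(t)) = D^2\eta(x(t))\bigl(\dot x(t),\dot x(t)\bigr) + \nabla\eta(x(t))\cdot \ddot x(t).
\end{equation*}
Because $\eta$ has compact support, $\|\nabla\eta\|_\infty$ and $\|D^2\eta\|_\infty$ are finite, and by the uniform bound above these two derivatives are bounded by a constant depending only on $C$ and $\eta$. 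Since $\mu$ is a probability measure, dominated convergence justifies differentiating twice under the integral sign, yielding that $t\mapsto \langle \mu(t),\eta\rangle$ is $C^2$ on $[0,1]$.

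The only non-bookkeeping step is the continuous (indeed linear) dependence of the cubic interpolant on the knot values, which is what converts the compactness of the supports into a uniform $C^2$ estimate; everything else is an application of dominated convergence. I do not anticipate a genuine obstacle.
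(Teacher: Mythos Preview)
Your proof is correct and follows essentially the same approach as the paper: both argue that the map $(x_1,\ldots,x_n)\mapsto c_{x_1,\ldots,x_n}$ is continuous into $C^2$ (as the solution of an invertible linear system), so the compactness of $\on{Supp}(\rho_1)\times\cdots\times\on{Supp}(\rho_n)$ yields a compact, hence uniformly bounded, family of $C^2$ paths. You simply spell out in more detail what the paper summarizes as ``the last point follows directly,'' namely the dominated convergence argument justifying differentiation of $t\mapsto\int_{\mathcal{H}}\eta(x(t))\,\ud\mu(x)$.
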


\begin{proof}
The set of cubic interpolants is compact since the map $(x_1,\ldots,x_n) \mapsto c_{x_1,\ldots,x_n}$ is continuous from $\R^{dn}$ to the space of $C^2$ fonctions (solution of an invertible linear system) and $\on{Supp}(\rho_i)$ are compact. Therefore, the set of maps are uniformly $C^1$. The last point follows directly.
\end{proof}

\begin{corollary}\label{ThmMultiMarginal}
The Kantorovich problem in Definition \ref{ThmKantorovich} on $\R^d$ reduces to a multimarginal optimal transport problem, as follows, let $c(x_1,\ldots,x_n)$ be the continuous cost of the cubic interpolant at times $t_1,\ldots,t_n$, the minimization of \eqref{EqMultiMarginal} reduces to the minimization of 
\begin{equation}\label{eqK}
\int_{M^n} c(x_1,\ldots,x_n) \ud \pi(x_1,\ldots,x_n) \phantom{1111111} \textrm{$(K)$}
\end{equation}
on the space of probability measures $\pi \in \mathcal{P}(M^n)$ and under the marginal constraints $(p_i)_*(\pi) = \rho_i$ where $p_i$ is the projection of the $i^{\text{th}}$ factor.
\end{corollary}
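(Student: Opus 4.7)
The plan is to exhibit a cost-preserving bijection between the feasible set of $(K)$ and the measures $\mu$ on $\mathcal{H}$ that are supported on cubic interpolants, then to invoke Lemma \ref{ThmReduction} to conclude that we may restrict the Kantorovich problem to such $\mu$ without loss of generality.

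First I would define the spline interpolation map $\Psi : M^n \to \mathcal{H}$ by $\Psi(x_1,\ldots,x_n) = c_{x_1,\ldots,x_n}$, which by the definition of the cubic interpolant is continuous (it is the solution of an invertible linear system in $(x_1,\ldots,x_n)$). By construction, $e_{t_i} \circ \Psi = p_i$ for each $i$, so for any $\pi \in \mathcal{P}(M^n)$ satisfying $(p_i)_*\pi = \rho_i$ the measure $\mu \coloneqq \Psi_*\pi$ on $\mathcal{H}$ satisfies the marginal constraints \eqref{EqMarginalConstraints}. Defining $c(x_1,\ldots,x_n) \coloneqq \int_0^1 |\ddot{c}_{x_1,\ldots,x_n}(t)|^2 \ud t$ to be the acceleration cost of the interpolant, the change-of-variables formula gives
\begin{equation*}
\int_{\mathcal{H}} |\ddot{x}|^2 \ud \mu(x) = \int_{M^n} c(x_1,\ldots,x_n) \ud \pi(x_1,\ldots,x_n).
\end{equation*}
Hence the infimum of $(K)$ bounds from above the Kantorovich infimum.

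Conversely, given any feasible $\mu$ for the Kantorovich problem, let $\pi \coloneqq (e_{t_1},\ldots,e_{t_n})_*\mu \in \mathcal{P}(M^n)$. The marginals of $\pi$ are precisely the $\rho_i$ since $p_i \circ (e_{t_1},\ldots,e_{t_n}) = e_{t_i}$. Moreover, since $c_{x_1,\ldots,x_n}$ minimizes the acceleration energy among paths passing through $x_i$ at time $t_i$, we have $|\ddot{x}|^2 \geq c(x(t_1),\ldots,x(t_n))$ pointwise for $\mu$-a.e. path (when finite-energy), and integrating gives
\begin{equation*}
\int_{\mathcal{H}} |\ddot{x}|^2 \ud \mu(x) \geq \int_{M^n} c(x_1,\ldots,x_n) \ud \pi(x_1,\ldots,x_n),
\end{equation*}
with equality iff $\mu$ is concentrated on the cubic interpolants, which by Lemma \ref{ThmReduction} is precisely the case at optimality. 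This yields the reverse inequality and shows that $\Psi_*$ maps minimizers of $(K)$ to minimizers of the Kantorovich problem and vice versa.

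The only real subtlety, and the step I would treat most carefully, is verifying that $c$ is a genuine continuous cost on $M^n$ (so that $(K)$ is a bona fide multimarginal transport problem with a continuous cost): this follows because the cubic interpolant depends continuously on $(x_1,\ldots,x_n)$ in $C^2$-norm, hence so does $\int_0^1|\ddot{c}_{x_1,\ldots,x_n}|^2\ud t$; on $\R^d$ one has an explicit closed form quadratic in $(x_1,\ldots,x_n)$. Everything else is bookkeeping with pushforwards.
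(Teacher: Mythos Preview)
Your proof is correct and follows the same idea as the paper, which simply records it as a ``direct consequence of Lemma~\ref{ThmReduction}''; you have spelled out the two inequalities (via $\Psi_*$ and via $(e_{t_1},\ldots,e_{t_n})_*$) that make that consequence explicit. If anything, your argument is slightly more self-contained, since the inequality $\int_{\mathcal{H}}|\ddot{x}|^2\,\ud\mu \geq \int_{M^n} c\,\ud\pi$ already follows from the minimizing property of the cubic interpolant and does not require the compact-support hypothesis of Lemma~\ref{ThmReduction}.
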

\begin{proof}
Direct consequence of Lemma \ref{ThmReduction}.
\end{proof}
Similarly 
\begin{corollary}\label{ThmMultiMarginalmonge}
The Monge problem in Definition \ref{Mongeformulation} on $\R^d$ reduces to a Monge multimarginal optimal transport problem, as follows, let $c(x_1,\ldots,x_n)$ be the continuous cost of the cubic interpolant at times $t_1,\ldots,t_n$, the minimization of \eqref{EqSecondProblem} reduces to the minimization of 
\begin{equation}\label{eqMmulti}
\int_{M} c\left(x, \varphi(t_1,x),\ldots,\varphi(t_n,x) \right) \ud \mu_0(x),
\end{equation}
on the space of path $\varphi \in C^2([0,1],M)$ (or even cubic splines) and under the marginal constraints $(\varphi(t_i))_*\mu_0 = \rho_i$.
 
 \end{corollary}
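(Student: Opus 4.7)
The argument mirrors the Kantorovich case by combining Fubini with the pointwise extremal property of cubic splines. First I would apply Fubini to rewrite
$$\int_0^1 \int_M |\ddot\varphi(t,x)|^2 \ud \mu_0(x) \ud t = \int_M \left(\int_0^1 |\ddot\varphi(t,x)|^2 \ud t\right) \ud \mu_0(x).$$
For each fixed $x$, the inner integral is the acceleration energy of the one-dimensional curve $t \mapsto \varphi(t,x)$, which passes through the prescribed nodes $\varphi(t_i,x)$ at times $t_i$. By the defining variational property of the cubic interpolant recalled just before the statement, this inner integral is bounded below by $c(\varphi(t_1,x),\ldots,\varphi(t_n,x))$, with equality if and only if $\varphi(\cdot,x)$ is the cubic interpolant through those nodes. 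Integrating in $x$ against $\mu_0$ yields the $\geq$ inequality between the Monge energy \eqref{EqSecondProblem} and the multimarginal cost \eqref{eqMmulti}.

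For the reverse inequality I would use a replacement trick analogous to Lemma~\ref{ThmReduction}. Given any admissible $\varphi$, set $\tilde\varphi(t,x) \eqdef c_{\varphi(t_1,x),\ldots,\varphi(t_n,x)}(t)$. Since cubic interpolants agree with the nodal data at the interpolation times, $\tilde\varphi(t_i,x) = \varphi(t_i,x)$, so $\tilde\varphi(t_i)_*\mu_0 = \rho_i$ and admissibility is preserved. The acceleration cost of $\tilde\varphi$ equals exactly $\int_M c(\varphi(t_1,x),\ldots,\varphi(t_n,x))\ud\mu_0(x)$, giving the $\leq$ inequality. Hence the two infima coincide, and any optimizer of \eqref{EqSecondProblem} must be cubic-spline in $t$ for $\mu_0$-a.e.~$x$, which is exactly the content of the corollary.

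The only technical point deserving attention is that $\tilde\varphi$ is a legitimate time-dependent map, which reduces to measurability in $x$. This follows from the continuity of $(x_1,\ldots,x_n)\mapsto c_{x_1,\ldots,x_n}$ from $\R^{dn}$ into $C^2([0,1],\R^d)$ (already invoked in the corollary preceding Corollary~\ref{ThmMultiMarginal}, since the spline coefficients are obtained from the nodal data by an invertible linear system): composing this continuous map with the measurable $x\mapsto(\varphi(t_i,x))_i$ yields joint measurability of $\tilde\varphi$. No serious obstacle is expected; morally, the corollary is just the pointwise statement that cubic splines are the $H^2$ acceleration-minimizers, transported into the integrand by Fubini.
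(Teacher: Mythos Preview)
Your proof is correct and follows exactly the approach the paper intends: the paper simply writes ``Similarly'' after Corollary~\ref{ThmMultiMarginal}, meaning the same replacement-by-cubic-interpolant argument as in Lemma~\ref{ThmReduction} applies pointwise in $x$ via Fubini. You have spelled out precisely those details, including the measurability check that the paper leaves implicit.
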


The dual formulation of the minimization problem $(K)$ is also well known \cite[Theorem 2.1]{PassKim2013}
\begin{definition}[Kantorovich dual problem $(KP)$]
Let  $ \mathcal{Q}=\left\{ \phi_i \in L^1(\rho_i \, \mu_0)\, , i=1..n \right\}$ be the space of integrable $n$-uplet. Maximize on $ \mathcal{Q}$ 
\begin{equation}\label{KP}
\sum_{i=1}^{n} \int_{M} \phi_i \rho_i \, \mu_0, \mbox{ under the constraint }\sum_{i=1}^{n}  \phi_i (x_i) \leq c(x_1,...,x_n).
\end{equation}
\end{definition}
And the following duality results holds true:
\begin{proposition}
There exists a $n$-uplet $(\phi_i)_{i=1..n}\in \mathcal{Q}$ optimal for $(KP)$. Moreover $(K)$=$(KP)$ and for any $\pi$ optimal in \eqref{eqK} there holds $\sum_{1}^{n} \phi_i (x_i) = c(x_1,...,x_n) $, $\pi$ almost everywhere.
\end{proposition}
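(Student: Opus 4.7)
The plan is to invoke the standard multimarginal Kantorovich duality (as in \cite{PassKim2013}) once the required regularity on the cost $c$ and the marginals is verified. The main ingredients needed are: (i) continuity (even boundedness) of $c$ on $\on{Supp}(\rho_1)\times\cdots\times\on{Supp}(\rho_n)$, (ii) tightness of the set of admissible plans, and (iii) a compactness/stability argument for the dual.

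First, I would record that the cost $c$ is continuous and bounded on the product of supports. This is exactly the content of the corollary following Lemma~\ref{ThmReduction}: since $(x_1,\dots,x_n)\mapsto c_{x_1,\dots,x_n}$ is continuous from $\R^{dn}$ to $C^2([0,1],\R^d)$, and integration against a smooth cost along a $C^2$ curve is continuous, the cost $c$ is continuous, hence bounded on the compact set $\on{Supp}(\rho_1)\times\cdots\times\on{Supp}(\rho_n)$. Moreover, admissible plans are supported in this compact set, so weak-$*$ compactness of $\mathcal{P}(M^n)$ together with continuity of $\pi\mapsto\int c\,\ud\pi$ reproves, if needed, existence of an optimal $\pi$ for $(K)$.

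Second, I would turn to the dual $(KP)$. The inequality $(KP)\le (K)$ is immediate by integrating the pointwise inequality $\sum_i\phi_i(x_i)\le c(x_1,\dots,x_n)$ against any admissible $\pi$ and using the marginal constraints $(p_i)_*\pi=\rho_i\mu_0$. For the reverse inequality and existence of optimizers, I would apply the standard Fenchel--Rockafellar (or Hahn--Banach) argument on $C(\prod_i\on{Supp}(\rho_i))$, exactly as in \cite[Theorem~2.1]{PassKim2013}: the primal is the infimum of a linear functional over a convex set of probability measures, whose dual (after identifying the marginal constraints with linear forms) is precisely $(KP)$; continuity and boundedness of $c$ give the required qualification hypothesis, so there is no duality gap and the supremum in $(KP)$ is attained. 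Existence of the maximizing $n$-tuple $(\phi_i)$ can also be obtained directly by the $c$-concavity/iterated $c$-transform method: starting from any maximizing sequence, replace each $\phi_i$ by its $c$-transform with respect to the other variables; this increases the objective, produces equicontinuous functions on compact supports, so by Arzel\`a--Ascoli (and a normalization fixing one additive constant per function) one extracts a uniform limit which is admissible and optimal.

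Finally, the complementary slackness assertion is a direct consequence of duality: for any optimal primal $\pi$ and optimal dual $(\phi_i)$, the nonpositive function $c(x_1,\dots,x_n)-\sum_i\phi_i(x_i)$ satisfies
\begin{equation*}
\int_{M^n}\Bigl(c(x_1,\dots,x_n)-\sum_{i=1}^n\phi_i(x_i)\Bigr)\,\ud\pi = (K)-(KP)=0,
\end{equation*}
so it vanishes $\pi$-a.e. The main delicate point, and the one I would write out most carefully, is the existence of an \emph{integrable} dual maximizer: this is where the $c$-transform construction on compact supports, together with continuity of $c$, is essential in order to replace an arbitrary maximizing sequence by one that is equibounded and equicontinuous.
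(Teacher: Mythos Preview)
Your approach is essentially the paper's: the paper gives no proof at all for this proposition and simply records it as the standard multimarginal duality of \cite[Theorem~2.1]{PassKim2013}, which is exactly the result you invoke after checking continuity and compact support. One small slip: in your complementary slackness step the function $c(x_1,\dots,x_n)-\sum_i\phi_i(x_i)$ is \emph{nonnegative} (from the dual constraint), not nonpositive; the conclusion is unaffected.
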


A natural question is whether the solution of the Kantorovich problem $(K)$ is admissible in the Monge formulation $(MS)$ (Definition  \ref{Mongeformulation}). 
With the formulation reduced above the spline, given by \eqref{eqK} and \eqref{eqMmulti}, one can try to apply existing theory to answer to this question, see \cite{ PassKim2013, Passreview} and references therein for precise criterion.
However our cost does not satisfy any of those known criterion. 
In fact, we have the following result which proves that the relaxation to plans are necessary even in the context of Theorem \ref{ThmRelaxation}.
\begin{proposition}(Counter Example) \label{ThCounterExample}
Given the three-marginals problems of minimizing the acceleration, there exist data $(\rho_0,\rho_1,\rho_2)$ such that $\rho_0$ is atomless and such that the solution of $(K)$ is not a (measurable) Monge map.
\end{proposition}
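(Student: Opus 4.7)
The plan is to exhibit an explicit counterexample on the line with three equispaced times $t_0 = 0$, $t_1 = 1/2$, $t_2 = 1$. For these times the cost of the cubic interpolant satisfies $c(x_0, x_1, x_2) = K\,(x_0 - 2 x_1 + x_2)^2$ for some $K > 0$: indeed the minimizer of $\int|\ddot\gamma|^2$ with prescribed values at $0, 1/2, 1$ has zero energy exactly when the three points lie on an affine function of time, that is when $x_1 = (x_0 + x_2)/2$, and the cost is a positive quadratic form vanishing on this affine constraint. Thus a Kantorovich plan has cost zero if and only if it is supported on the hyperplane $\{x_0 - 2 x_1 + x_2 = 0\}$.

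I would choose $\rho_0 = U[0,1]$ (atomless and compactly supported), $\rho_1 = \tfrac{1}{2}(\delta_0 + \delta_1)$, and $\rho_2 = \tfrac{1}{2}(U[-1,0] + U[1,2])$, where $U[a,b]$ denotes the uniform probability measure on $[a,b]$. Setting $T^0(x) = (x, 0, -x)$ and $T^1(x) = (x, 1, 2 - x)$, define
\begin{equation*}
\pi \;=\; \tfrac{1}{2}\, T^0_* U[0,1] + \tfrac{1}{2}\, T^1_* U[0,1].
\end{equation*}
Every triple charged by $\pi$ satisfies $x_0 - 2 x_1 + x_2 = 0$, so $\int c\, d\pi = 0$, and a direct computation shows that the three marginals of $\pi$ are exactly $\rho_0$, $\rho_1$, and $\rho_2$. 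Hence the Kantorovich infimum equals $0$ and is attained by the (non-Monge) plan $\pi$.

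The key step is to rule out that any admissible Monge pair $(T_1, T_2)$ attains this infimum. Since $(T_1)_* \rho_0 = \rho_1$, one must have $T_1(x) \in \{0, 1\}$ for $\rho_0$-a.e.\ $x$; set $A = T_1^{-1}(\{0\})$ and $B = T_1^{-1}(\{1\})$, two measurable subsets of $[0,1]$ of Lebesgue measure $1/2$ each. Zero pointwise cost would force $T_2(x) = -x$ on $A$ and $T_2(x) = 2 - x$ on $B$. Since $T_2$ acts as an isometry on each piece, the pushforward $(T_2)_* U[0,1]$ equals $\mathrm{Leb}|_{-A} + \mathrm{Leb}|_{2-B}$, a measure of density $1$ on its support; this cannot equal $\rho_2$, whose density on $[-1,0] \cup [1,2]$ is identically $1/2$, whatever the choice of measurable sets $A$ and $B$. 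Thus the Monge infimum is strictly positive while the Kantorovich infimum is $0$, and no optimizer of $(K)$ is a (measurable) Monge map.

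The only delicate point is this last density comparison: it must work uniformly over \emph{every} measurable partition $[0,1] = A \sqcup B$. The clean observation is that the pushforward of Lebesgue by any isometry of $\R$ onto a set has density $1$ on that set, which is incompatible with the prescribed density $1/2$ of $\rho_2$ on a strictly larger interval. This rules out all measurable Monge candidates and completes the argument.
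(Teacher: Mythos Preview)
Your proof is correct and follows the same underlying strategy as the paper's: build a plan supported on straight-line paths (hence zero acceleration cost), then derive a density contradiction showing that no Monge map can reproduce all three marginals at zero cost. The specific example differs, though. The paper places the two Dirac masses at the \emph{final} time $t_2$ (taking $\rho_0=\rho_1$ uniform on $[-1,1]$ and $\rho_2=\tfrac12\delta_{-1}+\tfrac12\delta_1$), whereas you place the atoms at the \emph{middle} time $t_1$ and make $\rho_2$ an absolutely continuous mixture of two intervals. Your contradiction step is arguably cleaner: the pushforward of Lebesgue by an isometry has density $1$ on its image, which can never equal the prescribed density $1/2$ of $\rho_2$, whatever the measurable partition $A\sqcup B$. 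It is also worth noting that your example has $\rho_2$ atomless, which speaks to the open question raised in the remark immediately following the paper's proof (though your $\rho_1$ remains atomic, so it does not settle that question).

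One small correction: the sentence ``Thus the Monge infimum is strictly positive'' is not what you have shown, and in fact it is false here. By Theorem~\ref{ThmRelaxation}, since $\rho_0$ is atomless and all supports are compact, the Monge and Kantorovich infima coincide and both equal $0$. What you have actually established, and what suffices, is that no Monge map \emph{attains} the value $0$; since every optimizer of $(K)$ has cost $0$, none of them can be induced by a map.
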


\begin{proof}
Consider $\rho_0(x) = \mathbf{1}_{[-1,1]}$ and the Dirac masses $a = \delta_1$ and $b = \delta_{-1}$ and the maps $T_a,T_b$ that respectively pushforward $\rho_0$ onto $a$ and $b$. These maps are uniquely determined and affine. Consider now $\rho_2 = \frac 12 (T_a)_*\rho_0  + \frac 12 (T_b)_*\rho_0 = \frac a2 + \frac b2$. Then, introducing $(T^{1/2}) = \frac12 (\on{Id} + T)$, we consider 
$\rho_1 = \frac12 (T^{1/2}_a)_*\rho_0 + \frac12 (T^{1/2}_b)_*\rho_0$, note that it is equal to $\rho_0$ since the maps $T^{1/2}_{a,b}$ are affine. 
\par
By construction, the minimization of the acceleration for $(\rho_0,\rho_1,\rho_2)$ is null since it is a mixture of plans supported by straight lines. If there existed an optimal Monge solution it is necessarily supported by only one map denoted by $T$ and since the cost is null, the map at time $1/2$ is necessarily $T^{1/2}$ defined above. The preimage of $1$ (resp. $-1$) by $T$ is a measurable set $A$ (resp. $B$). Then, necessarily, $\rho_1 = (T^{1/2})_*\chi_A + (T^{1/2})_*\chi_B$, and in fact, $T_{|A} = T_a$ and  $T_{|B} = T_b$ (since the image of the map is known). Therefore, we have $\rho_1 = 2 \chi_A \circ (T_a^{1/2})^{-1} + 2 \chi_B \circ (T_b^{1/2})^{-1}$ which is not equal to the uniform Lebesgue measure on $[-1,1]$.
\end{proof}
\begin{figure}
\centering
\begin{tikzpicture}[>=triangle 45,font=\sffamily]
    \draw[->] (0,0) -- (5,0)node[anchor= north] {\text{time}};   
     \draw[->] (0,-3) -- (0,3)node[anchor=east] {\text{$\R$}};  
     \draw[thick,,line width=1mm,blue]  (-0.05,-2) -- (-0.05,2) ;
     \draw[thick,,line width=1mm,red]  (0.05,-2) -- (0.05,2) ;
     \draw[fill=red!40,opacity=0.5] (0,-2) --(0,2)--(4,2)--(0,-2);
     \draw[fill=blue!40,opacity=0.5] (0,-2) --(0,2)--(4,-2)--(0,-2);
     \node at (4,-2) {$\bullet$};
     \node at (4,-2.3) {$-1$};
      \node at (4,2) {$\bullet$};
     \node at (4,2.3) {$1$};
     \node at (4,0) {$\bullet$};
     \node at (4,-0.3) {$t = 2$};
     \node at (2,0) {$\bullet$};
     \node at (2,-0.3) {$t = 1$};
     \node at (0,0) {$\bullet$};
     \node at (-0.6,-0.3) {$t = 0$};
\end{tikzpicture}
\caption{The inital density at time $0$ is described with a mixture of two densities colored in red and blue which are evolving indepently along straight lines in time. The blue density is mapped onto $-1$ and the red density is mapped onto $1$. The acceleration cost is null and the proof of Proposition \ref{ThCounterExample} shows that it is not possible to reproduce the density at time $1/2$ by a map.}
\end{figure}
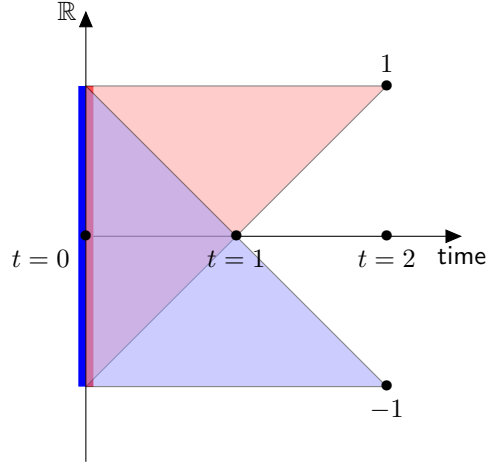

\begin{remark}
It is an open question to prove or disprove a similar result when the final density $\rho_2$ is atomless. 
The counterexample explained above strongly uses the fact that the final density is a sum of Dirac masses and it might not be robust when replacing the final density by a uniform density on a small interval.
\end{remark}

\subsection{The corresponding interpolation problem on the tangent space}\label{SecPhaseSpace}
The relaxed problem on the space of curves can be used to define variational interpolation problem on the phase space, or more precisely on the tangent space $TM$. Since the space $H^2([0,T],M)$ is contained in $C^1([0,T],M)$, one can formulate the optimal transport problem on phase space (identified with the tangent space) for the acceleration cost.
\begin{definition}[Optimal transport on phase space]\label{costphase}
Let $\bar{\rho_0},\bar{\rho_1} $ be two probability measures on $TM$.
Minimize on the space of probability measures on $\mathcal{H}$,
\begin{equation}\label{EqMultiMarginal2}
\min_{\mu} \int_{\mathcal{H}} |\ddot{x}|^2 \ud \mu(x) \,,
\end{equation}
which is a linear functional  of  $ \mu$ under the marginal constraints 
\begin{align}\label{EqMarginalConstraints2}
[j_0]_*(\mu) = \bar{\rho_0 }\,, \text{ and } [j_1]_*(\mu) =\bar{ \rho_1}\,,
\end{align}
where $j_t: H^2([0,T],M) \to TM$ is defined by $j_t(x) = (x(t),\dot{x}(t))$.
\end{definition}

\begin{proposition}[Optimal interpolation on phase space]\label{ThMongeInPhase}
The support of every optimal solution is contained in the set of cubic splines interpolating between $(x,v) \in \on{Supp}(\bar{\rho_0})$ and $(y,w) \in \on{Supp}(\bar{\rho_1})$. 
Moreover if $M = \R^d$ and if $ \bar{\rho_0}$ has density with respect to the Lebesgue measure, then the unique solution to Problem \eqref{EqMultiMarginal2} is characterized by a map $\varphi:$ $TM \mapsto TM$. 
\end{proposition}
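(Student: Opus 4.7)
The plan is to adapt the cubic-interpolant reduction of Lemma \ref{ThmReduction} and Corollary \ref{ThmMultiMarginal} to the phase-space setting, substituting the Hermite cubic (prescribed endpoint positions \emph{and} velocities) for the ordinary cubic interpolant through intermediate positions. Since $H^2([0,1],M)\hookrightarrow C^1([0,1],M)$, the trace maps $j_0,j_1$ are well defined on the support of any admissible $\mu$, and the constraints \eqref{EqMarginalConstraints2} force $(x(0),\dot x(0))\in\on{Supp}(\bar\rho_0)$ and $(x(1),\dot x(1))\in\on{Supp}(\bar\rho_1)$. Among all $H^2$ curves with fixed boundary data in $TM$, the unique minimizer of $\int_0^1|\ddot x|^2\ud t$ is the Hermite cubic $\gamma_{(x_0,v_0),(x_1,v_1)}$, so replacing every path in $\on{Supp}(\mu)$ by the corresponding spline (a measurable operation) yields an admissible competitor with no larger cost and unchanged values of $j_0,j_1$. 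Hence every optimizer is supported on Hermite splines, which establishes the first assertion and identifies \eqref{EqMultiMarginal2} with a two-marginal Kantorovich problem on $TM\times TM$ for the cost $c((x_0,v_0),(x_1,v_1))$ equal to the acceleration energy of that spline.

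For the second assertion, I specialize to $M=\R^d$ and compute $c$ explicitly. Writing the spline in the Hermite basis and integrating $|\ddot\gamma|^2$ term by term gives the quadratic form
\begin{equation*}
c((x_0,v_0),(x_1,v_1)) = 12|x_1-x_0|^2 - 12\langle x_1-x_0,\, v_0+v_1\rangle + 4|v_0|^2 + 4\langle v_0,v_1\rangle + 4|v_1|^2.
\end{equation*}
The structural input I will need for uniqueness and the Monge property is the twist condition: for $(x_0,v_0)$ fixed, the map $(x_1,v_1)\mapsto \nabla_{(x_0,v_0)} c$ must be injective on $TM$. Explicit differentiation yields the affine map
\begin{equation*}
(x_1,v_1)\longmapsto \bigl(-24(x_1-x_0)+12 v_0+12 v_1,\; -12(x_1-x_0)+8 v_0+4 v_1\bigr),
\end{equation*}
whose linear part is the $2d\times 2d$ block matrix $\bigl(\begin{smallmatrix} -24\,\Id & 12\,\Id \\ -12\,\Id & 4\,\Id\end{smallmatrix}\bigr)$ with determinant $48^d\ne 0$, hence a bijection of $\R^d\times\R^d$.

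The proof is completed by invoking the standard two-marginal optimal transport theorem (Brenier / Gangbo-McCann, in the form used in \cite{PassKim2013}): a continuous cost satisfying the twist condition, together with an absolutely continuous source measure $\bar\rho_0$, forces the optimal coupling $\pi$ to be unique and concentrated on the graph of a measurable map $T:TM\to TM$. Composing with the Hermite section $(x_0,v_0)\mapsto\gamma_{(x_0,v_0),T(x_0,v_0)}$ produces the map $\varphi:TM\to TM$ describing the unique optimizer $\mu$. I expect the main obstacle to be the explicit twist computation; everything else is a routine adaptation of arguments developed earlier in the paper.
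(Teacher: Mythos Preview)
Your proof is correct and follows exactly the approach of the paper: reduce to Hermite cubics as in Lemma~\ref{ThmReduction}, compute the explicit quadratic cost \eqref{EqCostTM}, and invoke the twist condition together with a Brenier-type theorem (the paper cites \cite[Theorem~10.28]{VilON}). The only difference is that you spell out the twist verification via the $48^d$ determinant, which the paper merely asserts.
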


Remark that the optimal solution in the last part of Proposition \ref{ThMongeInPhase} provides an interpolation on the phase space using $[j_t]_*(\mu)$.
\begin{proof}
The proof of the first part is similar to Lemma \ref{ThmReduction} and the second part follows by application of Brenier's theorem since the total cost of the cubic splines between $(x,v)$ and $(y,w)$ can be explicitly computed as
\begin{equation}\label{EqCostTM}
c_{ph}((x,v),(y,w)) = 12|x-y|^2 + 4(|v|^2 + |w|^2 + \langle v,w \rangle + 3 \langle v+w, x-y \rangle)\,
\end{equation}
and satisfies the twisted condition, so \cite[Theorem 10.28]{VilON} applies.
\end{proof}
Note that this problem is very different from using the Wasserstein distance on $\mathcal{P}(TM)$ where the tangent space $TM$ is endowed with the direct product metric. 
Indeed, the cost $c_{ph}$ does not vanish on the diagonal $(x,v) =(y,v)$ contrarily to the quadratic cost on $TM$.
\par
Interestingly, let us remark that the multimarginal problem can be recast as the minimization problem on $\Pi \in \mathcal{P}(\underbrace{TM \times \ldots \times TM}_{\text{n times}})$, denoting $\Pi_{t_i,t_{i+1}}$ the pushforward on $TM \times TM$ at times $(t_i,t_{i+1})$,
\begin{equation}
\min_{\pi} \sum_{i = 1}^{n-1} \langle \Pi_{t_i,t_{i+1}} ,c_{ph}((x_i,v_i),(x_{i+1},v_{i+1})) \rangle
\end{equation}
under the constraints that $[e_{t_i}]_*(\Pi_{i,i+1}) = \rho_i$. From the numerical point of view, this rewriting might be useful since the cost used on the multimarginal problem is now separable in time. This relaxation to the tangent space is used in the semidiscrete algorithm in Section \ref{SecImplementationSemiDiscrete}.
Obviously, up to the minimization on the variables $v_i$, we retrieve the minimization problem $(K)$ since one has a cost $c$ which is defined on $M^n$
\begin{equation}
\label{psc} 
c(x_0, \ldots, x_n) = \min_{v_0, \ldots ,v_n} \sum_{i=1}^{n-1} c_{ph}((x_i,v_i),(x_{i+1},v_{i+1}))
\end{equation}
where the index $i$ runs over the marginals. 

\section{Numerical Study }\label{multimarg}

\comment{We have discussed several variational relaxation of the  classical definition of splines, applied to the Wasserstein space of densities. 
 At least two different numerical techniques from Optimal Transportation  can be  used in this setting.   We apply the Entropic regularisation and Sinkhorn (briefly 
 recalled in appendix \ref{App2}  first to a simple Hermite interpolation problem (section \ref{Herm}) and then in section to the multimarginal problem (\ref{eqK}). 
 In section \ref{SD}, we use the semi-discrete Optimal Transportation approach in the spirit of  \cite{memi}  directly to problem (\ref{EqMultiMarginal}) 
 without the time discretisation in (\ref{eqK}).} 

\subsection{Hermite interpolation}
\label{Herm} 

In this section, we are interested in the problem of interpolation on the phase space described in the previous.
 The marginals  $[e_t]_*(\mu)$ are densities defined on the tangent space $TM$. If we only specify 
the marginals at  time $0$ and $1$  as empirical measures:  $[e_0]_*(\mu) = \sum_{i=1}^k \alpha_i  \, \delta_{x_i}  \delta_{v_i} $ and 
 $[e_1]_*(\mu) = \sum_{j=1}^k \beta_j  \, \delta_{y_j}  \delta_{w_j} $, as explained in Section \ref{SecPhaseSpace}, we can simplify the Kantorovich using the exact $L^2$ norm of the acceleration of the spline between 
 $(x_v)$ and $(y,w$), whose cost is given in Formula \eqref{EqCostTM}.  
Again, let us underline that this cost is \emph{not} a Riemannian cost on the tangent space of $\R^d$ since if $v = w$ and $x,y$ are close, the cost is dominated by the term $4(|v|^2 + |w|^2 + \langle v,w \rangle)$ which need not be zero. Then, the Kantorovich problem reduces to the minimization of 
\begin{equation}
\sum_{i,j = 1}^{k,l} \pi_{i,j} c((x_i,v_i),(y_j,w_j)) \,,
\end{equation}
under the constraints
\begin{equation}
\begin{cases}
\sum_{i = 1}^k \pi_{i,j} = \beta_j \,\\
\sum_{j = 1}^l \pi_{i,j} = \alpha_i \,.
\end{cases}
\end{equation}
It is straightforward to apply entropic regularization/Sinkhorn in this case 
which amounts to add, for a positive parameter $\varepsilon$, $\varepsilon \sum_{i,j} \pi_{i,j} \log(\pi_{i,j})$ to the previous linear functional and to numerically solve the corresponding variational problem with the Sinkhorn algorithm \cite{Sinkhorn67,Cut}  (See also appendix \ref{App2} where Sinkhorn algorithm is detailed in the more general multimarginal case). 
It is interesting to note that the choice of $\varepsilon$ is more delicate than in the standard case of a quadratic distance cost.  \\ 

In Figure \ref{FigHermite}, we present the convergence rate of this  method with respect to two different values of $\varepsilon$ and the most likely deterministic plan given the optimal plan $\pi^\varepsilon$.  Note that this entropic regularization method scales with the number of points as $N^2$ and is valid in every dimension.
\begin{figure}
 \centering	
  \begin{tabular}[h]{cc}
\includegraphics[width=8cm]{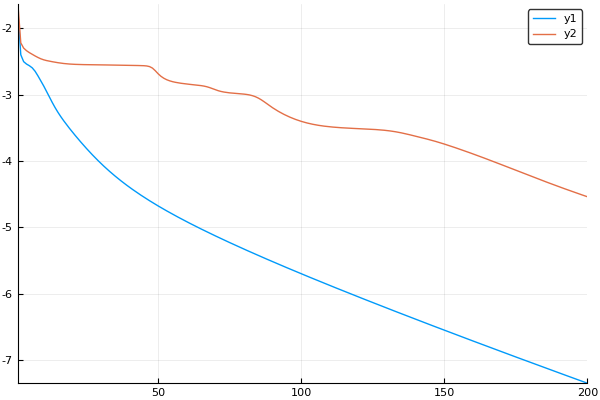}
&
\includegraphics[width=8cm]{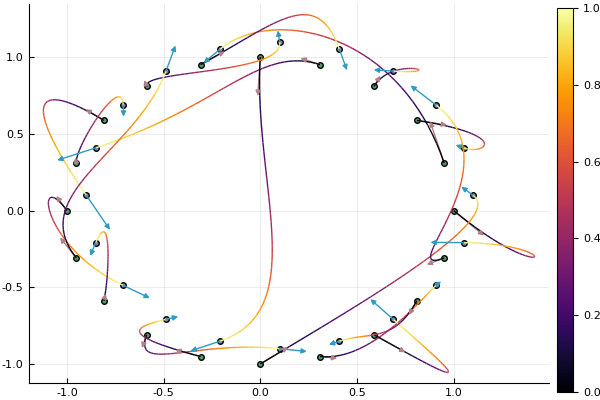}
\end{tabular}
\caption{Convergence (left) and Hermite interpolation problem between Two empirical measure in phase space (right).  We represent the most likely splines in the position space.  }
\label{FigHermite}
\end{figure}
\par

\subsection{MultiMarginal  formulation}  

This is the direct discretization of \eqref{EqMultiMarginal}  which avoids working in phase space with the cost (\ref{psc}) thus enabling  fast computations in 2D.  
 In what follows,  the time cylinder $[0,1] \times M $ is  discretized in time as $ \bigotimes_{i=0,N} M_i$, the product space of $N+1$ copies of $M$ at each of the 
 $N+1$ time steps.  We will use  a regular time step discretization  $\tau_i = i\, d\tau$ where $d\tau =\frac{1}{N}$.
 Using a  classic finite difference approach, the time discretization of  (\ref{EqMultiMarginal}) is 
  
\begin{equation}
\label{JN} 
\min_{\mu_{d\tau} } \int_{\bigotimes_{i=0,N} M_i } c_{d\tau} (x_1,...,x_N)  \ud \mu_{d\tau}(x_1,..,x_N) \,,
\end{equation}
where $\mu_{d\tau}$ now spans the space of probability  measures on  $\bigotimes_{i=0,N} M_i$ representing\
the space of piecewise linear curves passing through $x_0,x_1,...,x_N$ at times $\tau_0, ..., \tau_N$. \\

A straighforward computation gives 
 \begin{equation}
 \label{acost} 
 c_{d\tau} (x_1,...,x_N)  :=   \sum_{i=1,N-1}   \dfrac{ \| x_{i+1}+x_{i-1}-2\, x_i \|^2 }{d\tau^3} 
\end{equation}

For all  times,  marginals (\ref{TimeMarginals}) are computed as  : 
\begin{equation}
\label{TM} 
\tau_j  \mapsto \int_{\bigotimes_{i \neq j } M_i }  \ud \mu_{d\tau} (x_1,..x_N)  
\end{equation}

In order to simplify the presentation we will assume that the marginal constraints (\ref{EqMarginalConstraints})
are set at times $t_1,..t_n$ which coincide with times steps of the discretization (of course $n < N$, meaning 
the number of constraint is not the same as the number of time steps). 

In short,  there exist $(j_1,..j_n) \in [0,N]$ such that 
\[
(t_1,..,t_n) =  (\tau_{j_1}, ... , \tau_{j_n}). 
\]
 The constraint  (\ref{EqMarginalConstraints}) becomes  for all $k = 1,..n$ 
\begin{equation}
\label{JC} 
\int_{\bigotimes_{i \neq j_k } M_i }  \ud \mu_{d\tau} (x_1,..x_N)   = \rho_{j_k} ( x_{j_k}) 
\end{equation}
where $\rho_{j_k}$ is the prescribed density to interpolate at time $\tau_{j_k} = t_k$.  \\

The time discretized problem is the multimarginal problem (\ref{JN} -\ref{JC}). \\

The simplest space discretization strategy is to use a regular  cartesian grid.  In dimension 2 and for $M = [0,1]^2$ and at time $t_i$, the grid 
will be denoted  $x_{\alpha_i,\beta_i} = (\alpha_i \, h,\beta_i h)$  for $(\alpha_i,\beta_i) \in [0,N_x]$ and $h = \frac{1}{N_x}$, $ a = \{\alpha_i\}$ 
and $b = \{\beta_i \}$ will be the vectors of indices.

The time and space discretization of the problem then becomes 

\begin{equation}
\label{JND} 
\min_{ T}  \sum_{a,b}   C_{a,b}  \, T _{a,b} 
\end{equation}
Where $T$ is the  $N\times N_x \times N_x$ tensor of grid values  $\mu_{d\tau}(x_{\alpha_1,\beta_1} ,..,x_{\alpha_N,\beta_N})$ and 
\begin{equation}
\label{acostD} 
C_{a,b} =  c_{d\tau} (x_{\alpha_1,\beta_1} ,..,x_{\alpha_N,\beta_N}) 
\end{equation} 
The marginals (\ref{TM}) at all times $\tau_j$ are given by 
\begin{equation}
\label{TMD} 
 \sum_{a \setminus \{\alpha_{j} \}  , \, b \setminus \{\beta_{j} \}  }  T _{a,b}  
\end{equation}
The constraints  (\ref{JC}) therfore becomes for all $k$ 
\begin{equation}
\label{JCDk} 
 \sum_{a \setminus \{\alpha_{j_k} \}  , \, b \setminus \{\beta_{j_k} \}  }  T _{a,b}   =  \rho_{j_k} ( x_{\alpha_{j_k}, \beta_{j_k}} ) 
\end{equation}
$a \setminus \{\alpha_{j_k} \}  $ denotes the set of indices $a$ minus $\alpha_{j_k}$. \\

 The Entropic  regularized  problem is 
\begin{equation}
\label{JNDe2} 
\min_{ T^\epsilon}  \sum_{a,b}   \{  C_{a,b}  \, T^\epsilon_{a,b} + \epsilon  \, T^\epsilon_{a,b} \,  \log(T^\epsilon_{a,b}) \} 
\end{equation}
and easier to solve. See Appendix \ref{App2} for a description of Sinkhorn algorithm.

\subsection*{Numerical Simulations} 

\paragraph{\textbf{1D case: }}We present, figures \ref{FigBasicExample} and  \ref{FigBasicExample2},   a 1D test case to highlight some of the qualitative properties of the cubic splines interpolation on the space of densities. 

\begin{figure}[htb]
\begin{center}
\begin{tabular}{cc}
\begin{tabular}{c}
$\null$\hspace{-0.35cm}
\includegraphics[width= 6cm]{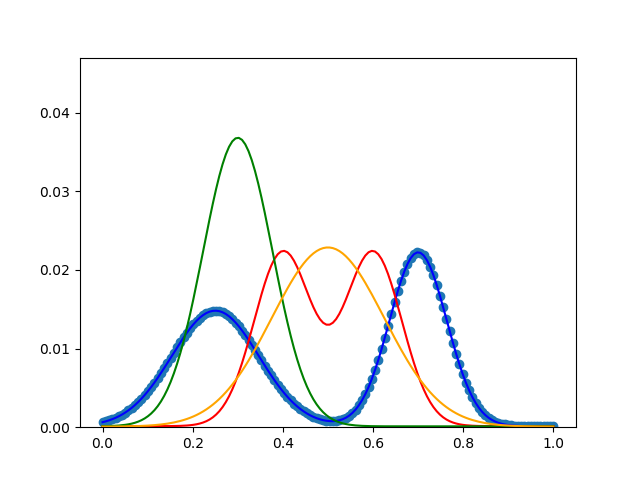}
\\
Initial time data and targets
\end{tabular}
\\
\begin{tabular}{ccc}
$\null$\hspace{-0.35cm} \includegraphics[width= 5.1cm]{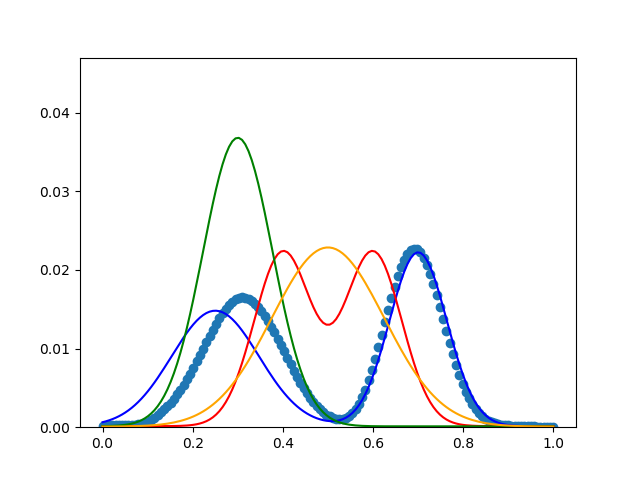}&
$\null$\hspace{-0.45cm} \includegraphics[width= 5.1cm]{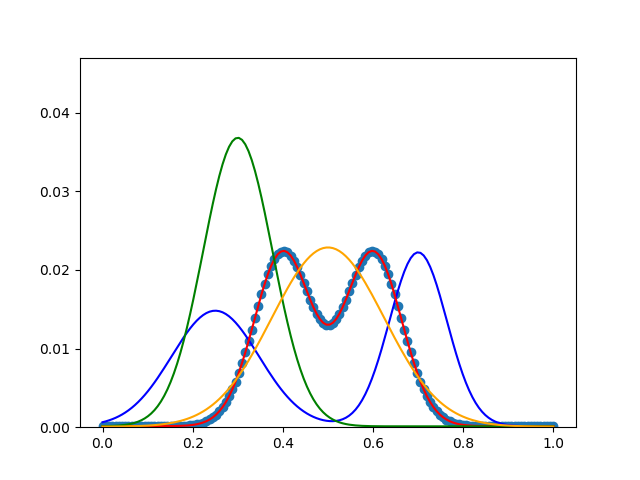}&
$\null$\hspace{-0.45cm} \includegraphics[width= 5.1cm]{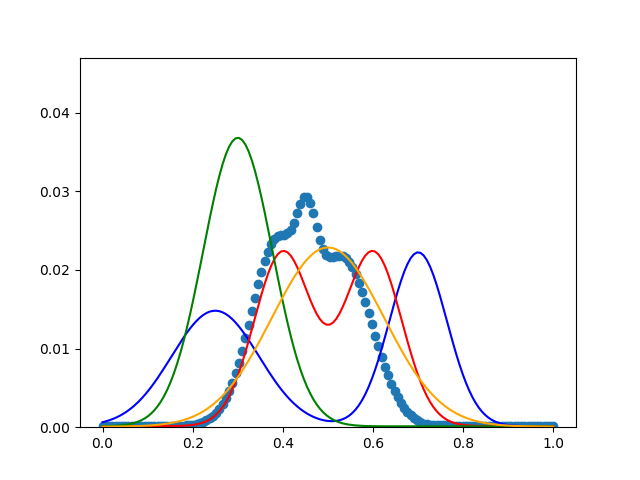}\\ 
$\null$\hspace{-0.35cm} \includegraphics[width= 5.1cm]{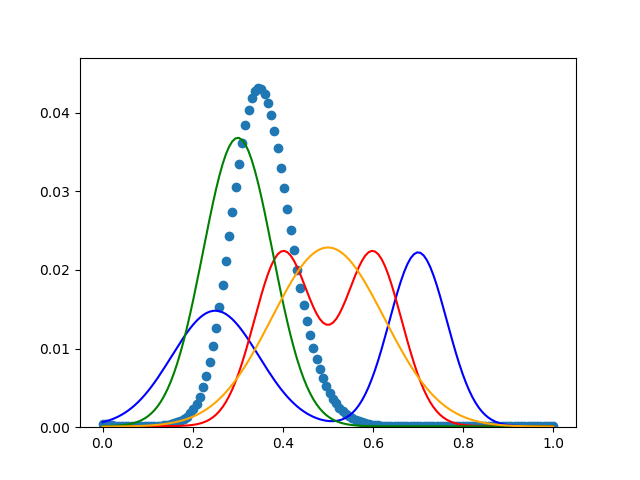}&
$\null$\hspace{-0.45cm} \includegraphics[width= 5.1cm]{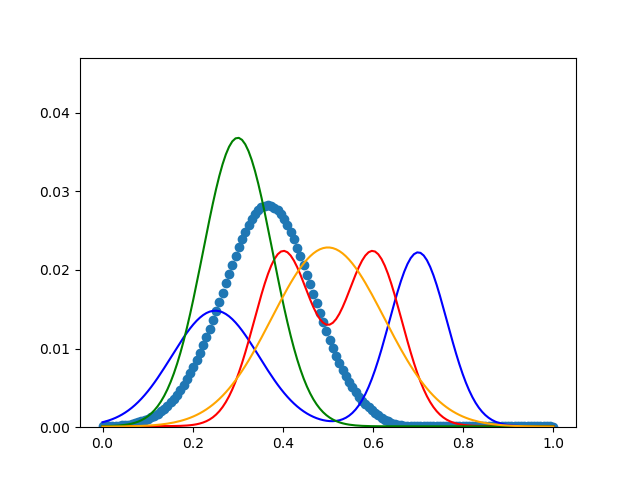}&
$\null$\hspace{-0.45cm} \includegraphics[width= 5.1cm]{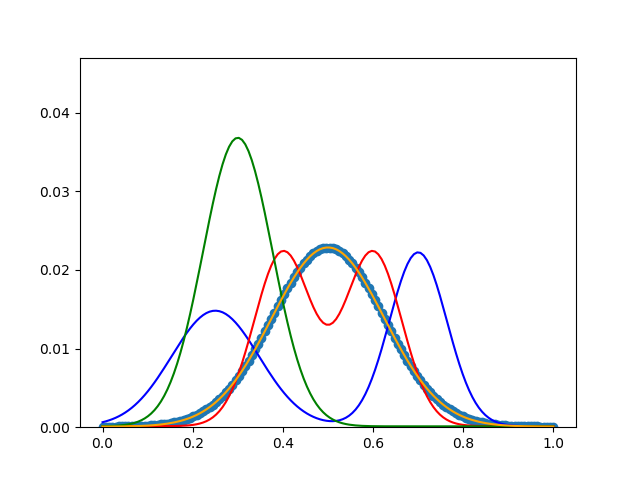}
\end{tabular}
\end{tabular}
\caption{Four interpolation timepoints, $1,6,11,16$ and representation of the four density configurations, as well as $6$ intermediate times. 
The doted line represent the reconstructed density curve in time.
This experiment underlines that the spline curve has more smoothness in time and can present some concentration or diffusion effects depending on the data which would not be present for the usual Wassertein geodesic. The entropic regularization parameter is $\varepsilon = 8.10^{-5}$.
}
\label{FigBasicExample}
\end{center}
\end{figure}
We consider four interpolation time points and the corresponding data are mixture of Gaussians of different standard deviations. We use a discretization of $140$ points on the interval $[0,1]$ with $16$ time steps. The doted line represent the reconstructed density curve in time.  This experiment shows that the mass can concentrate or diffuse in some situation.  

Another  important point here is that the entropic regularization parameter has an important impact on this concentration/diffusion effects: we show the simulations for $\varepsilon = 0.002$ and $\varepsilon = 8.10^{-5}$. In the simulation with a large $\varepsilon$, the concentration effect is not present and it is due to the diffusion on the path space.

\begin{figure}[htb]
\begin{tabular}{ccc}
$\null$\hspace{-0.35cm} \includegraphics[width= 5.1cm]{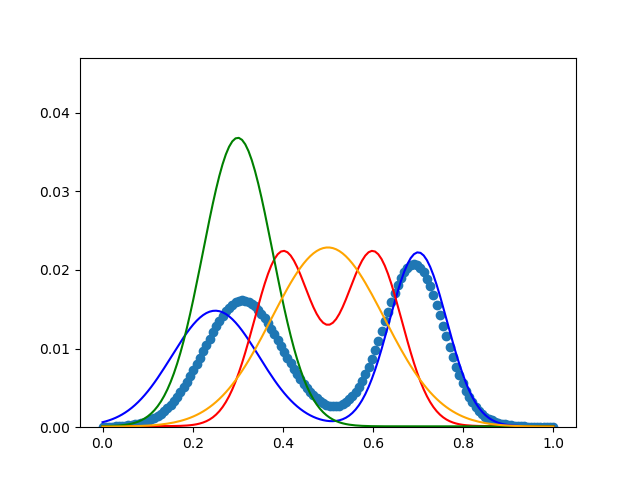}&
$\null$\hspace{-0.45cm} \includegraphics[width= 5.1cm]{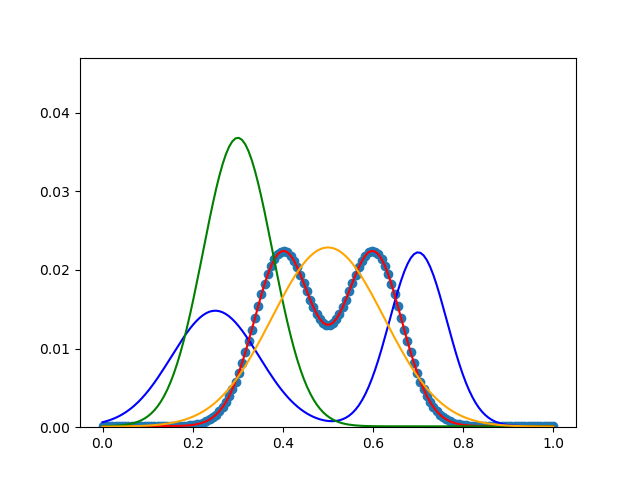}&
$\null$\hspace{-0.45cm} \includegraphics[width= 5.1cm]{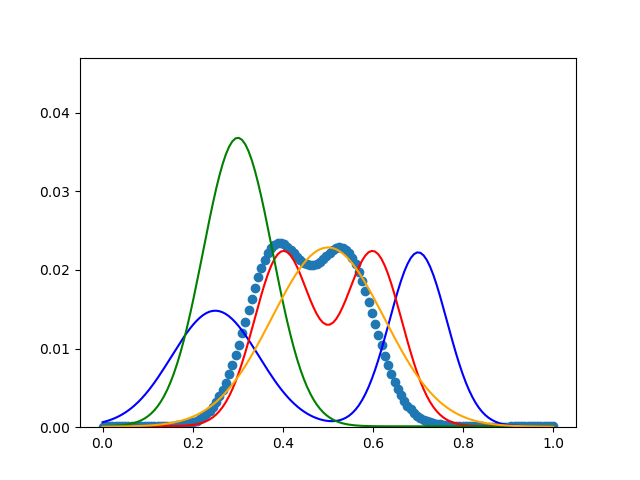}\\ 
$\null$\hspace{-0.35cm} \includegraphics[width= 5.1cm]{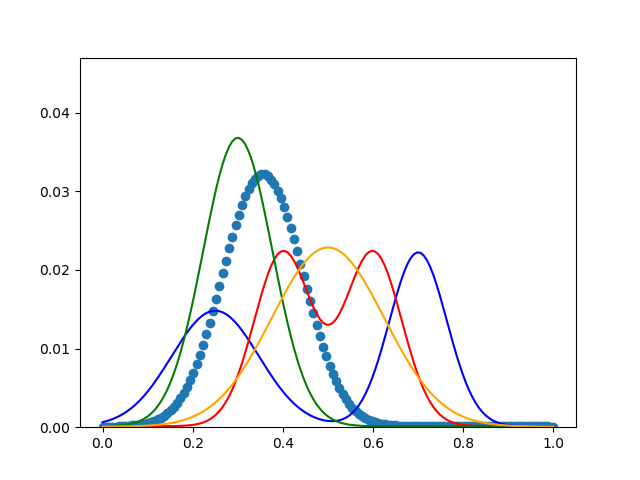}&
$\null$\hspace{-0.45cm} \includegraphics[width= 5.1cm]{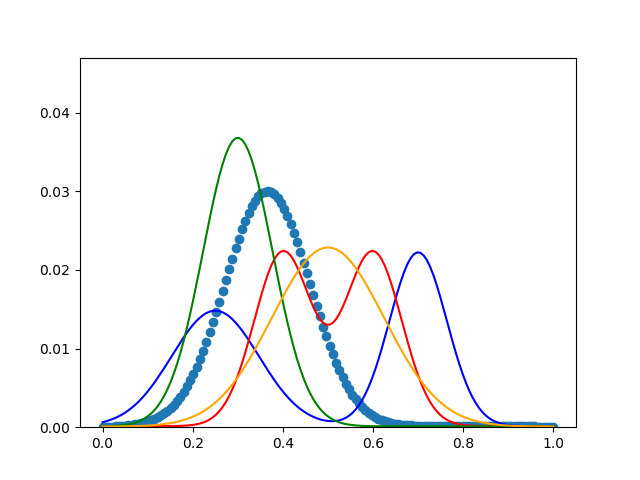}&
$\null$\hspace{-0.45cm} \includegraphics[width= 5.1cm]{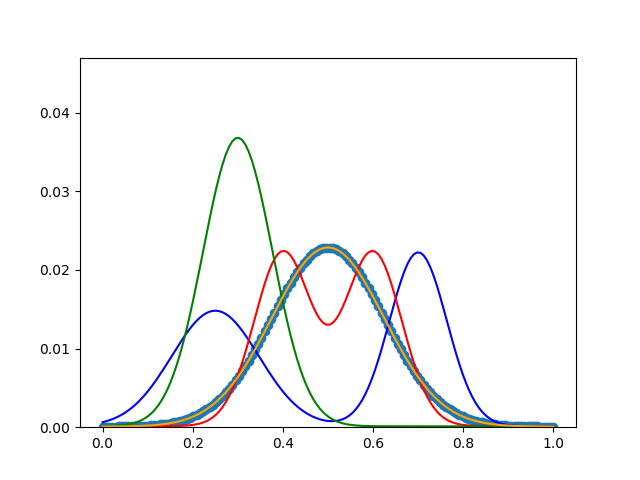}
\end{tabular}
\caption{The same experiment with a larger entropic regularization parameter $\varepsilon = 0.002$. As expected, we observe less concentration of mass.
}
\label{FigBasicExample2}
\end{figure}

\paragraph{\textbf{2D case: }}We present a 2D test case which computes a Wasserstein spline in the sense of (\ref{JND}) interpolating four Gaussian identical 
densities at time 1, 5, 13, and   17, see figure \ref{f2d1}. We use a time step $d\tau =1$  and 17 $N=17$ time steps.  The space discretization is $Nx = 50$. 
The entropic regularization parameter is $\epsilon = 0.002$, note that the stability of the method depends on this parameter. 
It also generates artificial diffusion as it becomes more costly top concentrate the available mass on fewer Euclidean splines between the points of the support of the 
four Gaussians. We can compute the interpolating densities at intermediate times using (\ref{TMD}) but is more 
interesting to represent in figure \ref{f2d2} the contour line of the third  quartile, i.e. the highest values of the densities representing 1/4 of the total mass. 
Comparing with figure \ref{rotation2}, it seems clear that the Entropy diffusion spreading  pollutes the solution of the original problem (without entropic regularization).  \\

We compare this solution with the classical Quadratic cost Optimal Transport interpolation, i.e. with the speed instead of the acceleration 
in the cost.  More precisely taking : 
\begin{equation}
 \label{acost2} 
 c_{d\tau} (x_1,...,x_N)  :=   \sum_{i=0,N-1}   \dfrac{ \| x_{i+1}- x_i \|^2 }{d\tau} 
\end{equation}

As expected the mass follows respectively the linear interpolation or the Euclidean spline interpolation of the center of the Gaussians which 
are represented as thick red lines in figure \ref{f2d1}. \\

Finally we show the convergence of the Sinkhorn iterate for both simulations in figure \ref{f2d2}.
The convergence is much slower for the speed case but we did not optimize the implementation which does not need tensors and instead 
just used a degraded version of the acceleration code. This may be the reason for this strange difference.

\begin{figure}
 \centering	
  \begin{tabular}[h]{cc}
\hspace{-2cm} \includegraphics[width=10cm]{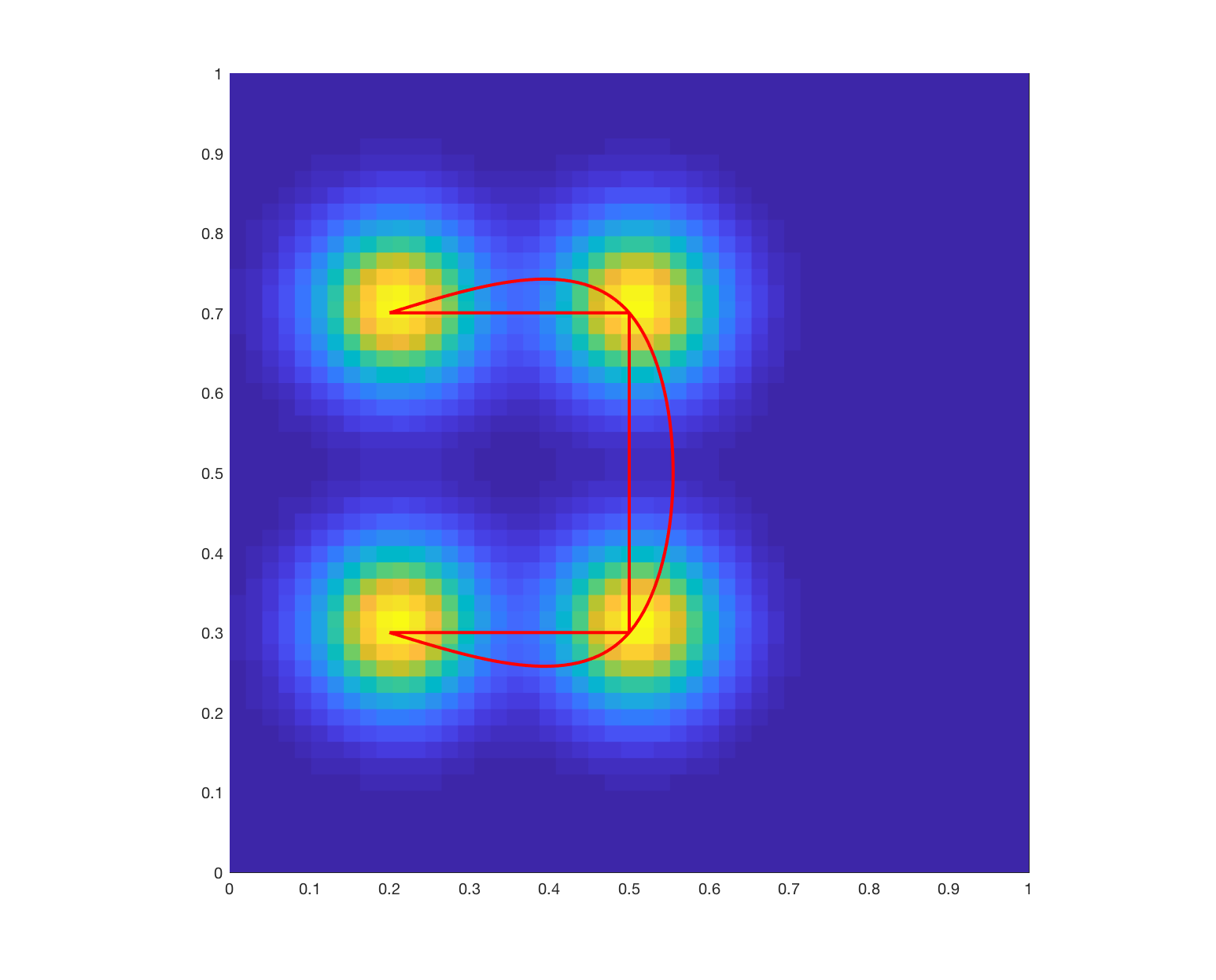}
&
\hspace{-2cm} \includegraphics[width=10cm]{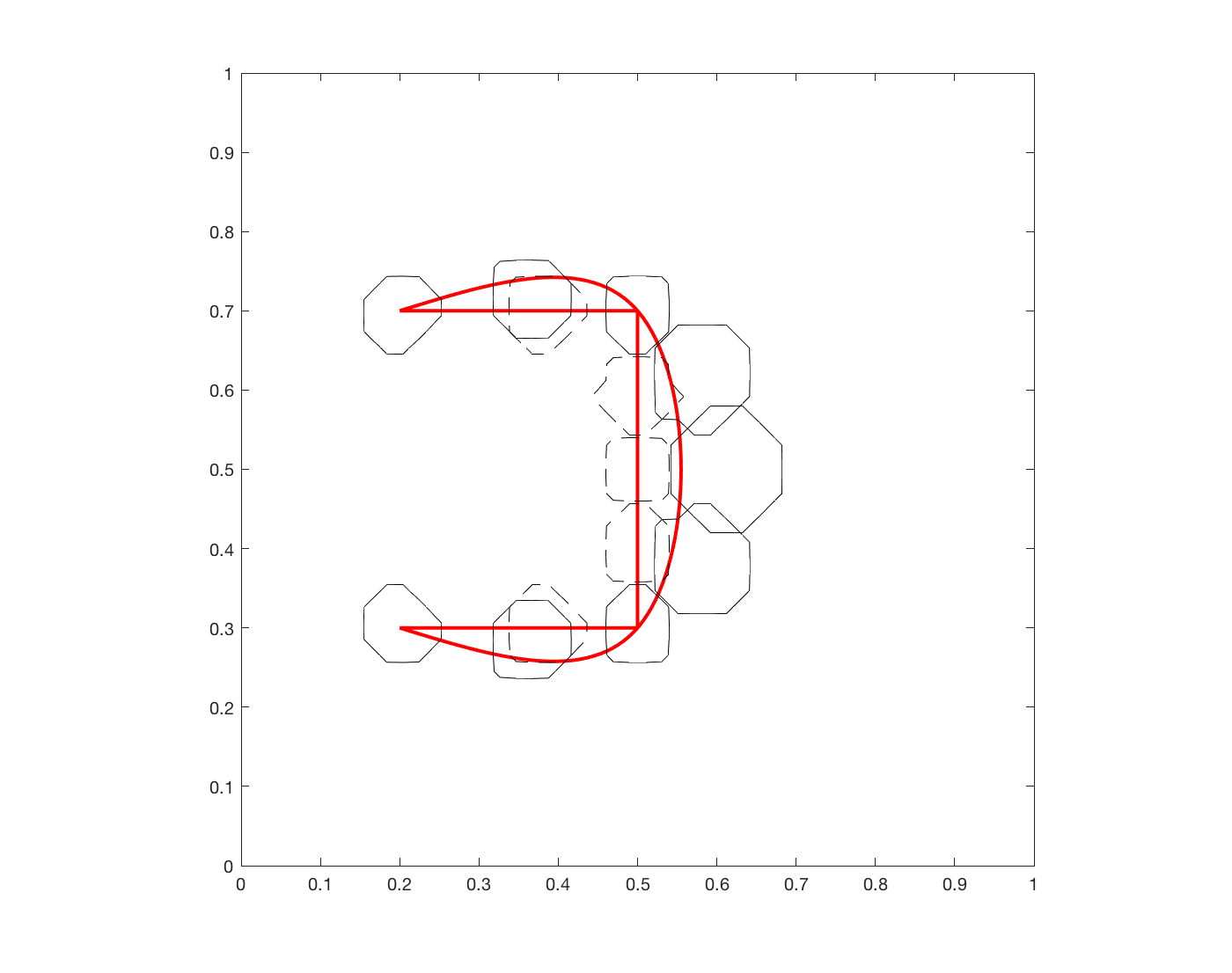}
\end{tabular}
\caption{ Spline interpolation of Four Gaussians with 17 times steps.  Left : the data and the linear and classic cubic spline  
interpolation of the of Gaussian center point. Right :  the level curve of the  third quartile of the density every 2 time steps, in solid line for our Spline Wasserstein interpolation and 
in dashed line for the classic quadratic cost (speed) interpolation.  } 
\label{f2d1}
\end{figure}

\begin{figure}
 \centering	
  \begin{tabular}[h]{cc}
\hspace{-1cm}\includegraphics[width=8cm]{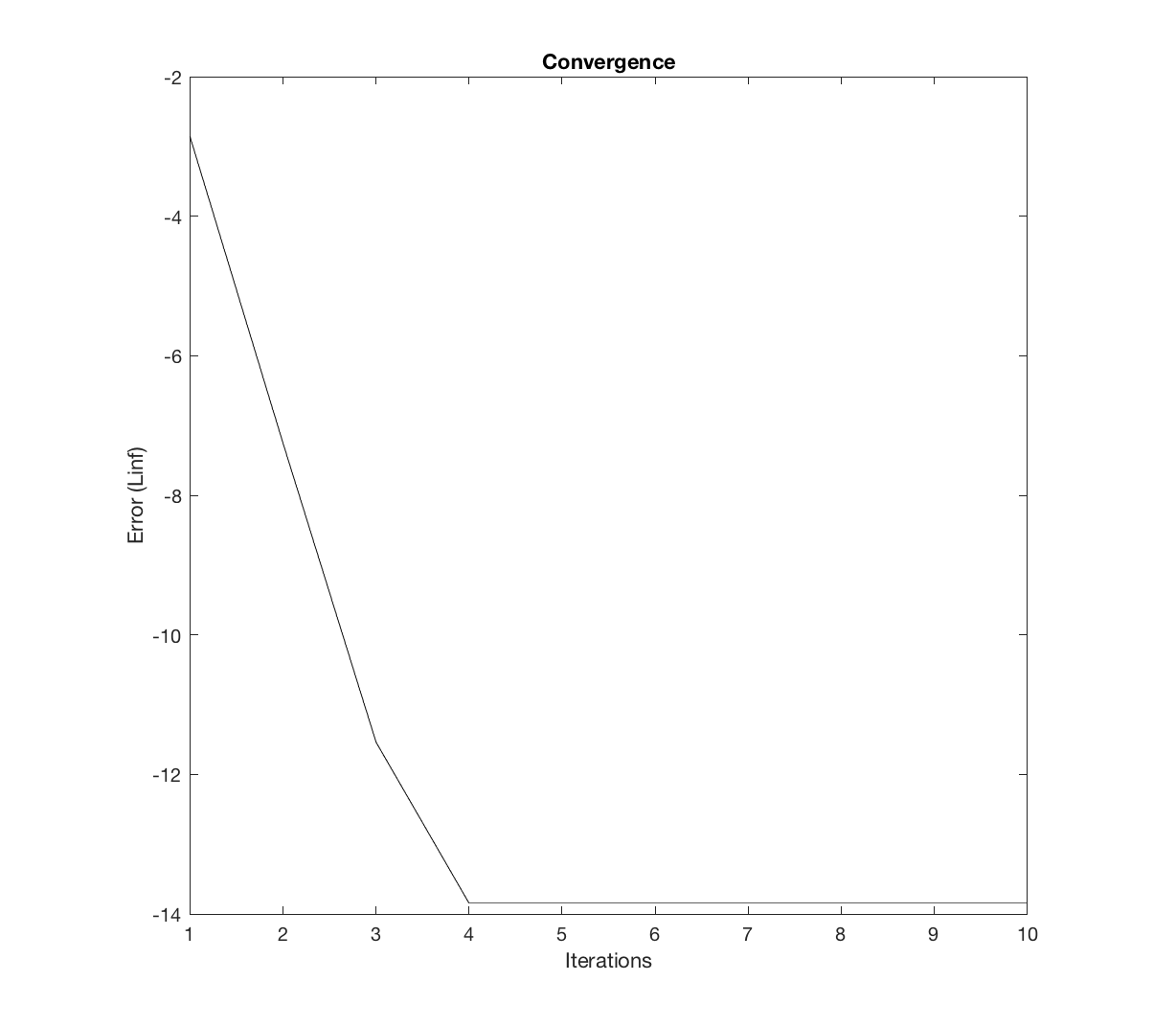}
&
\hspace{-1cm}\includegraphics[width=8cm]{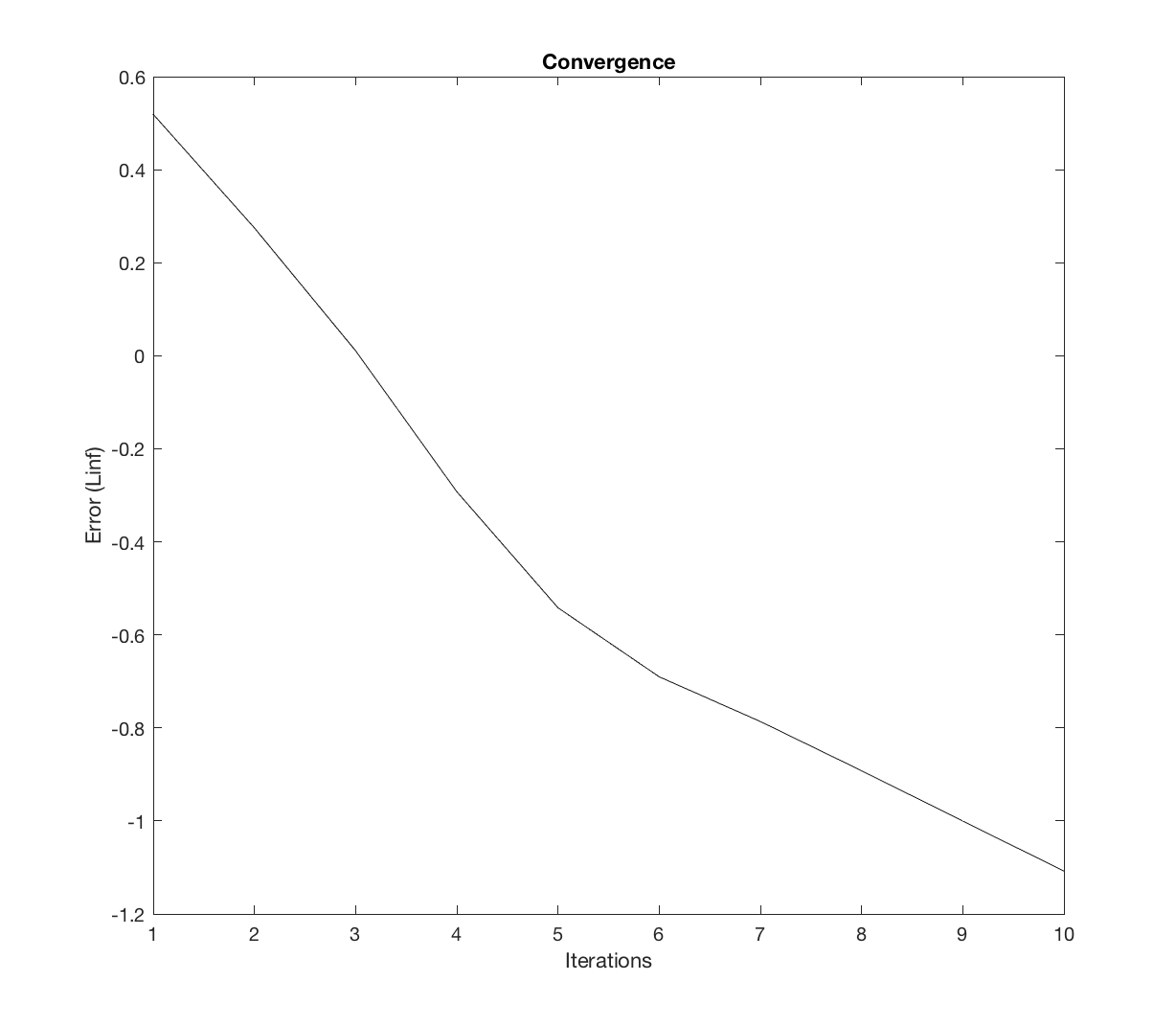}
\end{tabular}
\caption{ Convergence, i.e. Infinity norm of the difference of the Dual unknown between to Sinkhorn iteration.  This is computed every 10 iterations. 
 Left :for the acceleration cost, right :  for the speed cost . } 
\label{f2d2}
\end{figure}

\subsection{Semi-Discrete approach}\label{SD}
We propose another numerical scheme based on the semi-discrete approach introduced by M\'erigot in \cite{Mer} in dimension 2 and developed by Levy \cite{Levy} in dimension 3. Here we approximate the optimal plan $\pi$ in the formulation \eqref{eqK} by a sum of N tensor product of diracs masses. That is $\pi_N =\sum^N_{j=1}  \left( \bigotimes^n_{i=1}  \frac1N  \delta_{X^i_j}\right)= \sum^N_{j=1}   \frac1N  \delta_{\left( X^1_j,\ldots,X^n_j\right)} $. 
\begin{remark}
Since there is a unique corresponds between $n$ points $\left( X^1_j,\ldots,X^n_j\right)$ and the spline $c_{X^1_j,\ldots,X^n_j }$ passing through these points at time $(t_1,\ldots.t_n)$ the measure $\pi_N $ can also be seen as $N$ direct masses defined over the set of splines: $\pi_N = \sum^N_{j=1}   \frac1N  \delta_{c_{ X^1_j,\ldots,X^n_j}}$. 
\end{remark}

 We then have to relax the constraint $(p_i)_*(\pi) = \rho_i$  since $(p_i)_*(\pi_N)= \sum^N_{j=1}   \frac1N  \delta_{X^i_j} $ cannot be absolutely continuous. 
 It leads to the following variational problem. 
\begin{definition}[Semi-discrete variational problem]\label{SDVdef}
Let $\epsilon>0$, $0 = t_1 < \ldots < t_n =1$, $n \geq 3$ and $(\rho_i)_{i=1\ldots n}$ be n absolutely continuous measures. Recall that $c(Y_1,\ldots,Y_n)$ is the cost of the cubic spline passing through the points $(Y_1,\ldots,Y_n)$  at time $(t_1,\ldots.t_n)$. 
Let 
$$\mathcal{Q}^N = \left\{ \sum^N_{j=1}   \frac1N  \delta_{\left( X^1_j,\ldots,X^n_j \right) } \middle|  (X_j)_{j=1,\ldots,N}\in M^{n} \right\}.$$ Then the semi-discrete variational problem, (SDV), is given by 
\begin{equation}\label{SDV}
(SDV) = \min_{\mathcal{Q}^N} \, \frac{1}{N}\sum_{j=1}^N c(X^1_j,\ldots, X^n_j) + \sum^n_{i=1} \frac{1}{2\epsilon^2} W_2^2\left(\sum^N_{j=1}   \frac1N  \delta_{X^i_j} , \rho_i \right),
\end{equation}
where $W_2$ is the classical Wasserstein distance given by the quadratic cost.
\end{definition}

The main drawback of this method is that, as illustrated in the numerical simulations below, the problem $(SDV)$ is not convex. 

\subsubsection{Implementation} \label{SecImplementationSemiDiscrete}
In order to solve numerically the minimization problem $(SDV)$ we use the reformulation of the spline cost in the phase space, that is in $\R^d$, with $t_{i+1}-t_i = \delta_i$:
\begin{equation}\label{equivalencedescout}
	c\left(Y_1,\ldots,Y_n\right) = \min_{\left(V_1,..V_n\right)\in (\R^d)^n} \sum_{i=1}^{n-1}  \frac{1}{\delta_i^3} 
	c_{ph}\left[\left(Y_i,\delta_i V_i\right), \left(Y_{i+1},\delta_i V_{i+1}\right)\right]
\end{equation}
where
\begin{equation}\label{EqCostTM2}
c_{ph}[(x,v),(y,w)] = 12|x-y|^2 + 4(|v|^2 + |w|^2 + \langle v,w \rangle + 3 \langle v+w, x-y \rangle).
\end{equation}
The advantage of the formulation \eqref{equivalencedescout} is that the cost is separable in the phase space and the gradient with respect to speeds and positions is easy to compute. 
 
 We thus implement a gradient descent in the phase space using the lbfgs function in python. We compute the gradient by automatic differentiation. 
 The Wasserstein terms in the minimization problem \eqref{SDV} depends only on the positions and are computed thanks to M\'erigot Library \cite{merigotgit} in dimension 2.
  To do simulations in dimension 3 one has to use L\'evy Library \cite{levywebpage}. The density constraints $\rho_i$ are given trough linear functions on a triangulation.
 
\begin{remark}
Other problems can be addressed using similar optimization problem as in Definition \ref{SDVdef}. 
 For instance the quadratic cost in \eqref{SDV} leads to Wasserstein interpolation. 
 We can also interpolate with curves as smooth as we want, using for instance the $L^2$ norm of the derivative of order $m$ of the curve or even other classical interpolating curves. 
 \end{remark}

  
 \subsubsection{Numerical simulations}
  
  We propose three numerical simulations, one to compare the qualitative results with respect to the multi marginal approach and especially Figure \ref{f2d1}. 
  A second one in order to illustrate the non-convexity issue and a third one for applications in images.

\paragraph{\textbf{The rotation case: Figure \ref{rotation2}.}}
In this case we compute Wasserstein splines passing through four gaussians with variance 15 and center of masses respectively $(0,2),(10,0),(10,6),(0,4)$ with constraint parameter $\epsilon= 10^{-3}$. The number of points is $2000$. 
In this case the result is a global minimizer and is not sensible to initialization. 
The lack of convexity is not an issue.
Compare to Figure \ref{f2d1}, this approach gives a better a approximation of the intermediate densities especially with less diffusion.

\begin{figure} \centering	
\begin{tabular}[h]{cc}
\hspace{-2cm} \includegraphics[width=10cm]{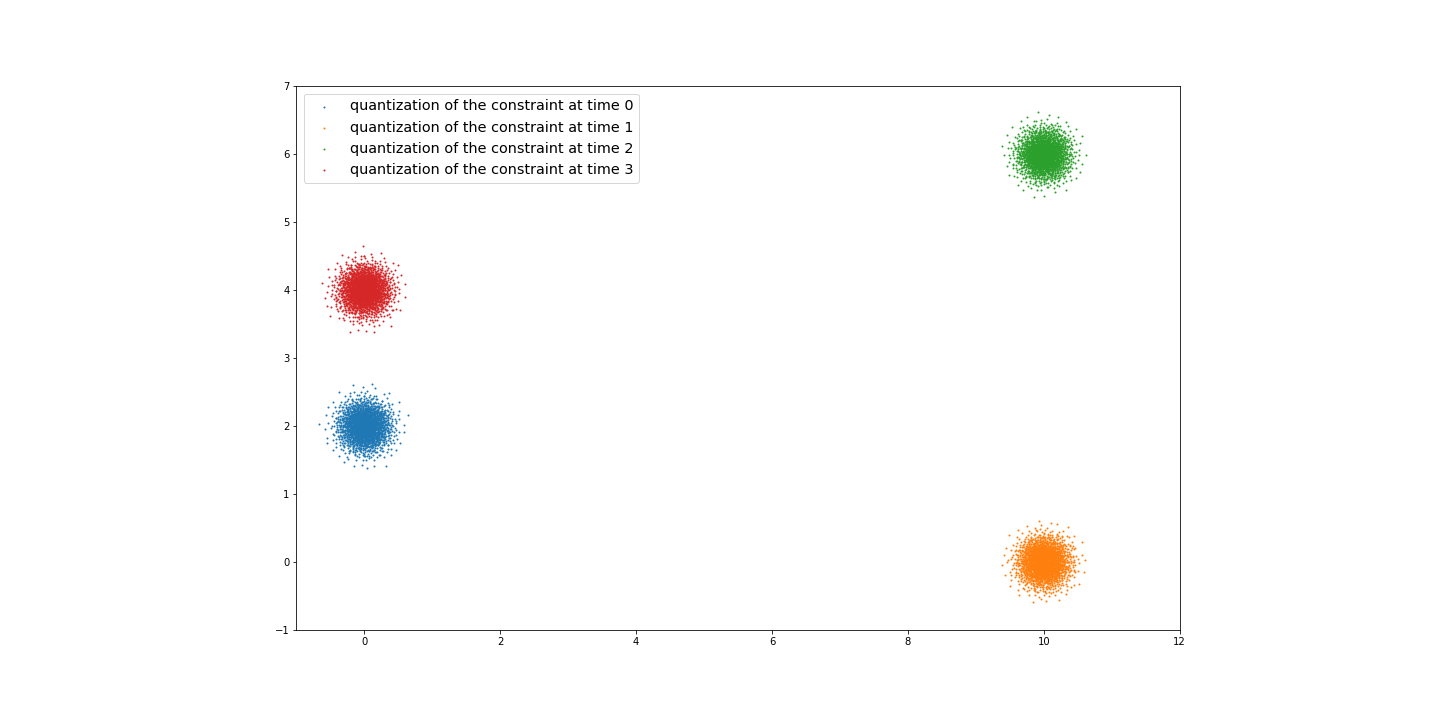}
\hspace{-2cm} \includegraphics[width=10cm]{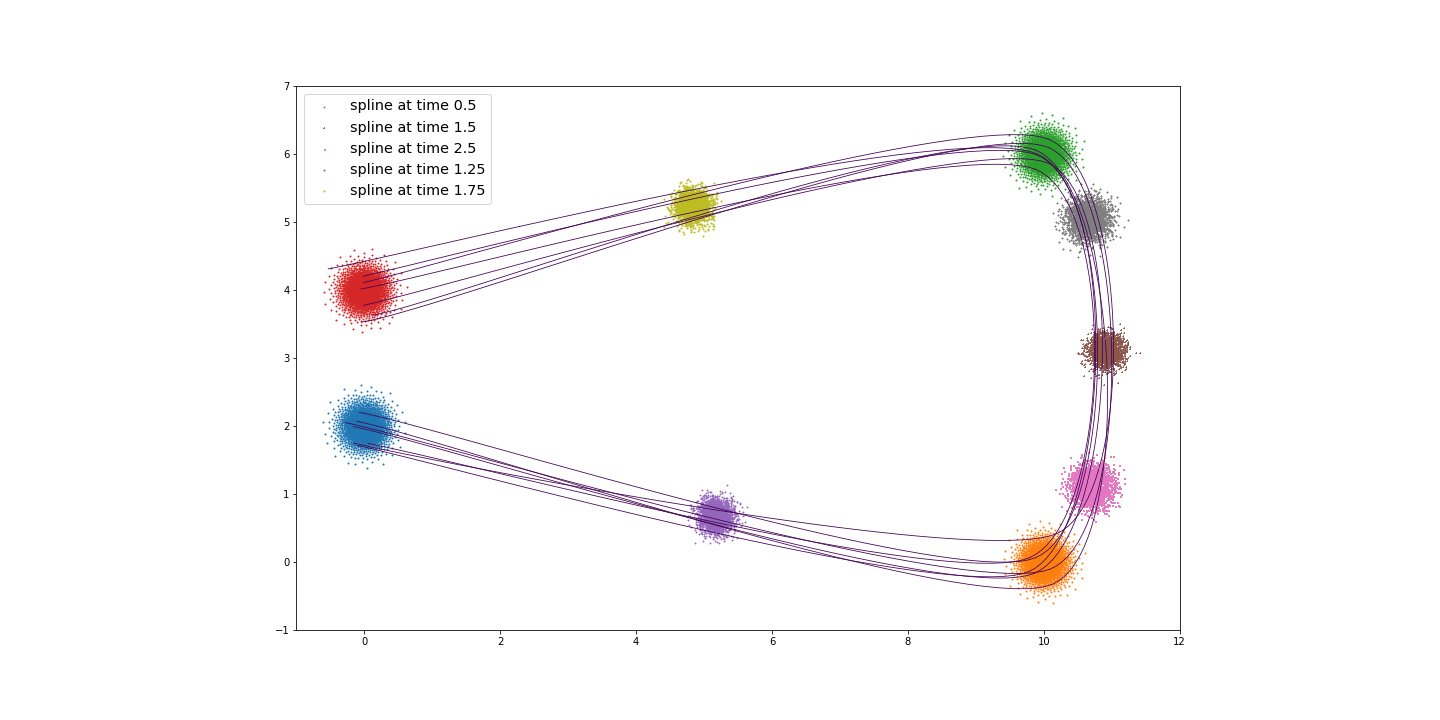}  \\
\hspace{-2cm} \includegraphics[width=15cm]{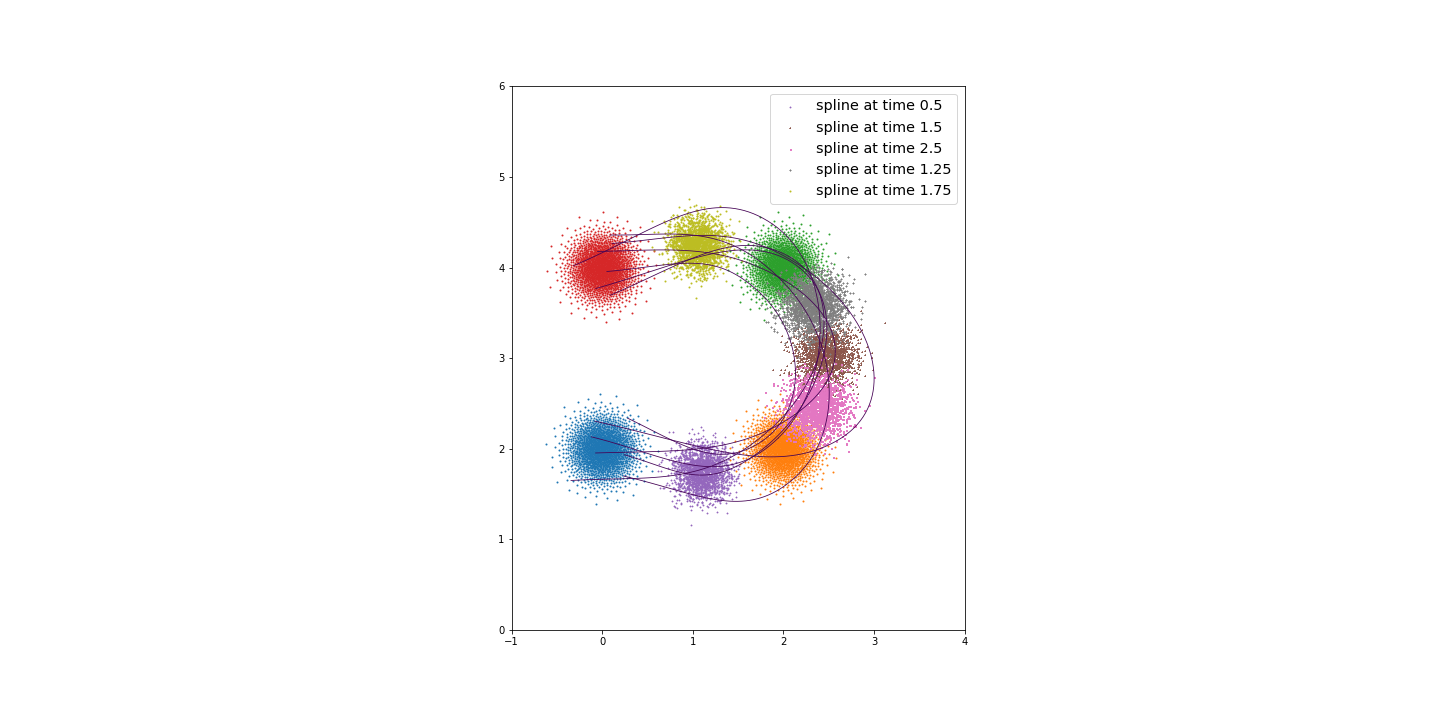}

\end{tabular}
\caption{Spline interpolation for gaussians with 2000 Dirac masses for each measure, $\epsilon=1O^{-3}$. 
Left: sample of each density constraints $\rho_i$, i=1,2,3,4.
Right: Some trajectory of diracs masses randomly chosen, marginals at the constrained time $0,1,2,3$ and marginals at time $0.5,1.2,1.5,1.7,2.5$.    
Second Line : the same configuration as in figure \ref{f2d1}.
} 
\label{rotation2}
\end{figure}

\paragraph{\textbf{The crossing case: Figure \ref{crossing0}, \ref{crossing} }}
  Here we compute Wasserstein splines starting from a mixture of two gaussians with centrer $(0,-1),(0,1)$ and variance $15$ then passing through a gaussian with center $(0,0)$ and variance $15$ and finishing at a translation of the initial mixture. The number of points is $2000$, $\epsilon$ will value $1$ or $1000$.
   
 We expect the global minimizer to be straight lines crossing around the middle constraint and with a low cost. 
Numerically depending on the initial conditions, we can recover different local minimizers, 
the local minimum which is reached is extremely correlated with the initial coupling. 
In Figure \ref{crossing0} we observe that changing $\epsilon$ but keeping a similar initial coupling, all points are given by a quantization of the middle density with a random enumeration and $0$ initial speed, yields to a similar local minimum.

\begin{figure} \centering	
 \begin{tabular}[h]{cc}
 \hspace{-2cm} \includegraphics[width=9cm]{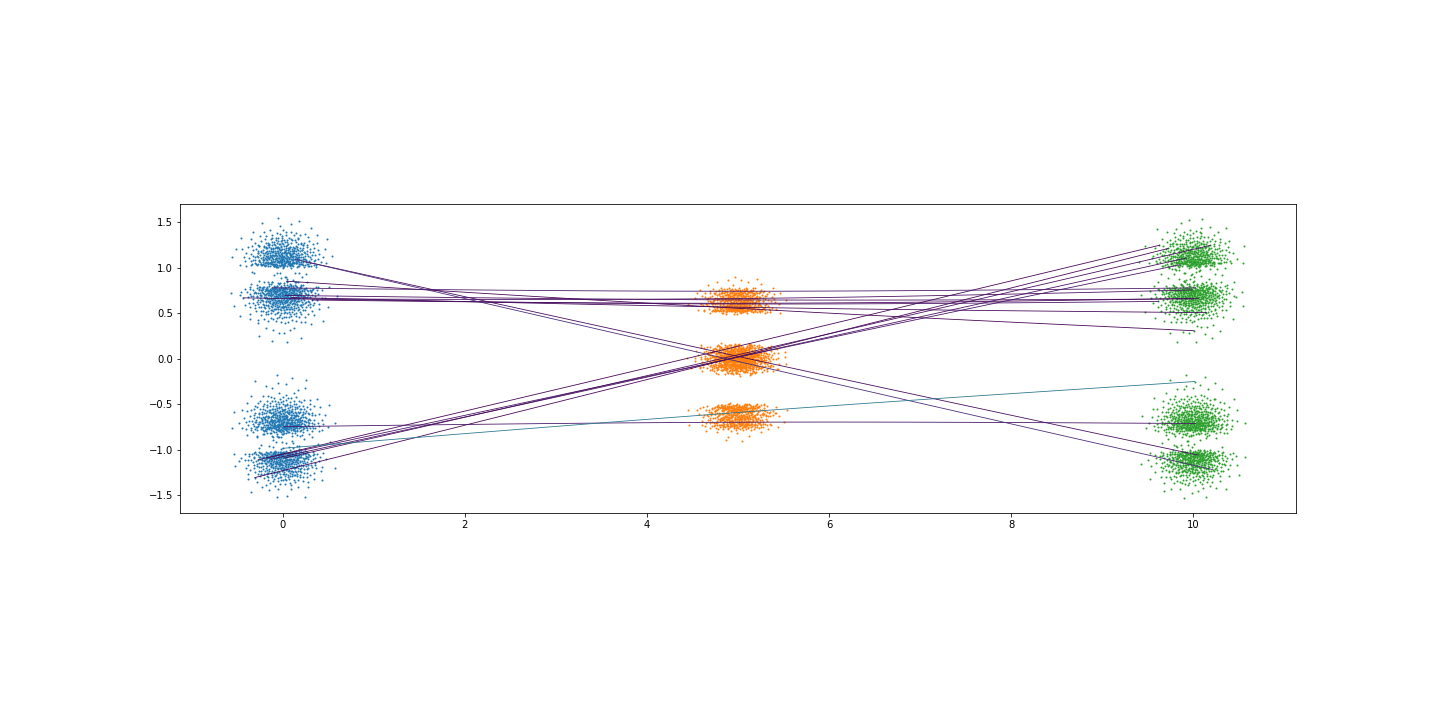} &
\hspace{-0.5cm}\includegraphics[width=9cm]{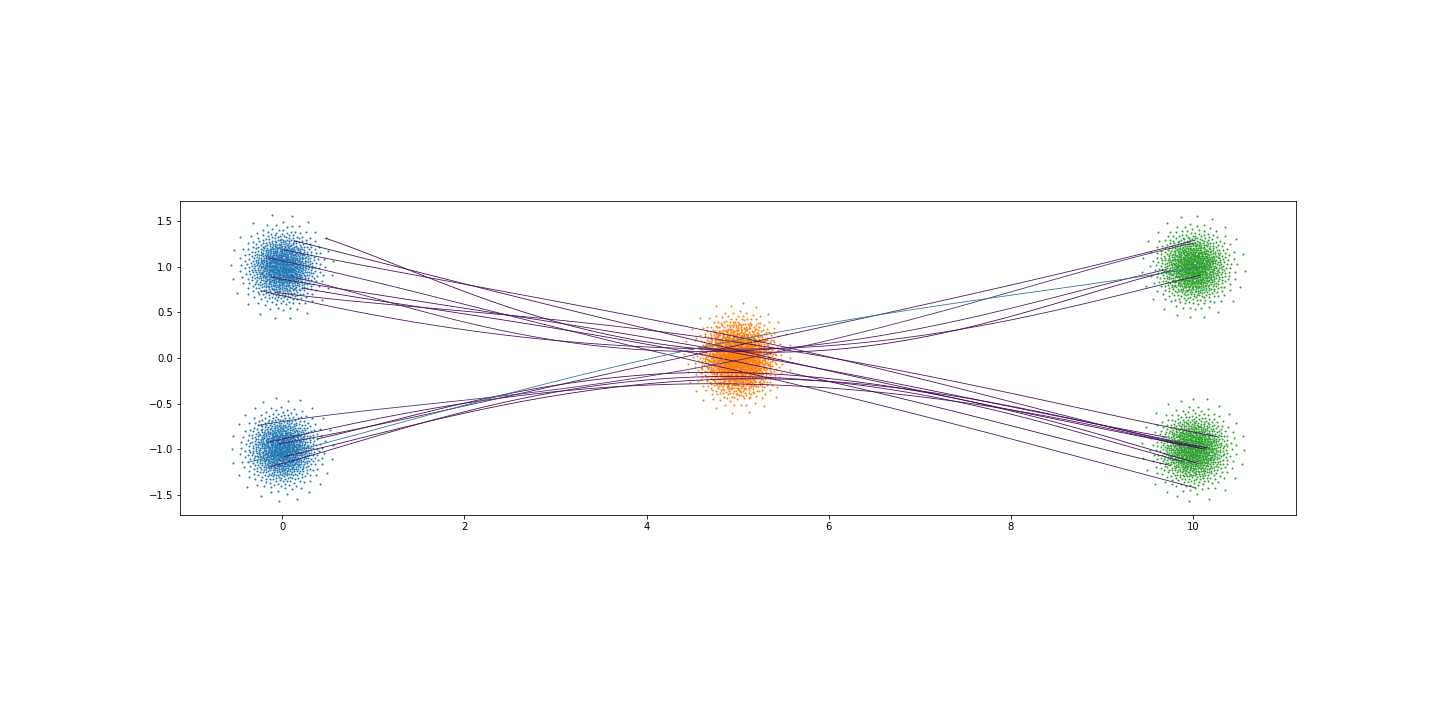} 
\end{tabular}

%

\caption{Spline interpolation for a mixture of gaussians with 2000 Dirac masses. Same initial coupling for both figure. 
Left: $\epsilon=1$.
Right: $\epsilon=1000$.   
} 
\label{crossing0}
\end{figure}

Finding a good initial coupling is the hard part in order reach the global maximum.
One solution is to initialize with points close to each other and a very large $\epsilon$. 
Then one as to add some noise in the gradient and decreases slowly $\epsilon$.
 Unfortunately we didn't find a systematic approach for this random multi-scale method and one as to fit the parameters case by case.  
In Figure \ref{crossing} the global minimizer is achieved by first computing the spline with a relaxed constraint, i.e. large $\epsilon$, only for the final time ( in pratice $\epsilon =[1000,1000,1]$. 
Then we use this result, which has the good initial coupling, as and initial condition and set $\epsilon=1000$ for all the constraints.
 We also compare this results with the interpolation with a different initial condition and the Wasserstein geodesics.
  In all these simulations we clearly observe that particles can cross along the dynamic appart from the optimal transport inthis situation.

\begin{figure} \centering	
 \begin{tabular}{cc}
 \hspace{-2cm} \includegraphics[width=10cm]{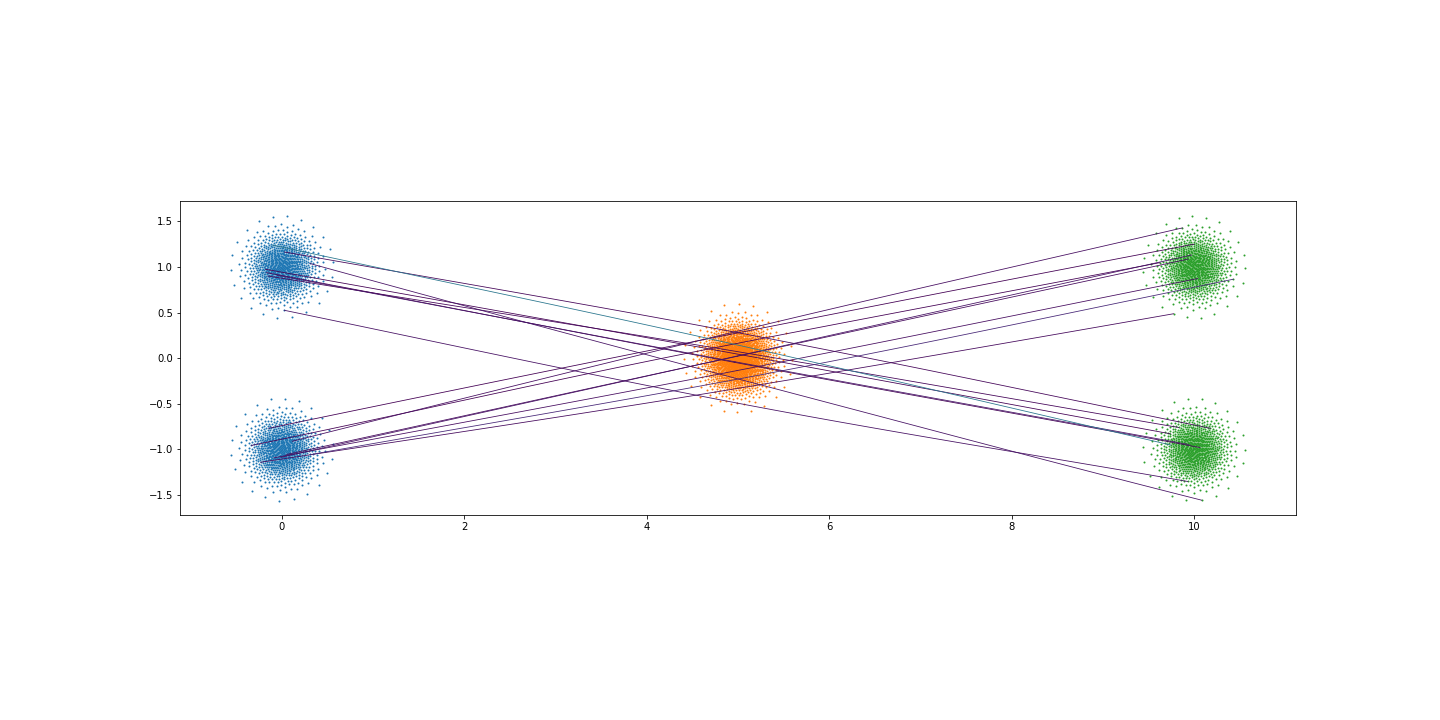} 
\hspace{-0.5cm} \includegraphics[width=10cm]{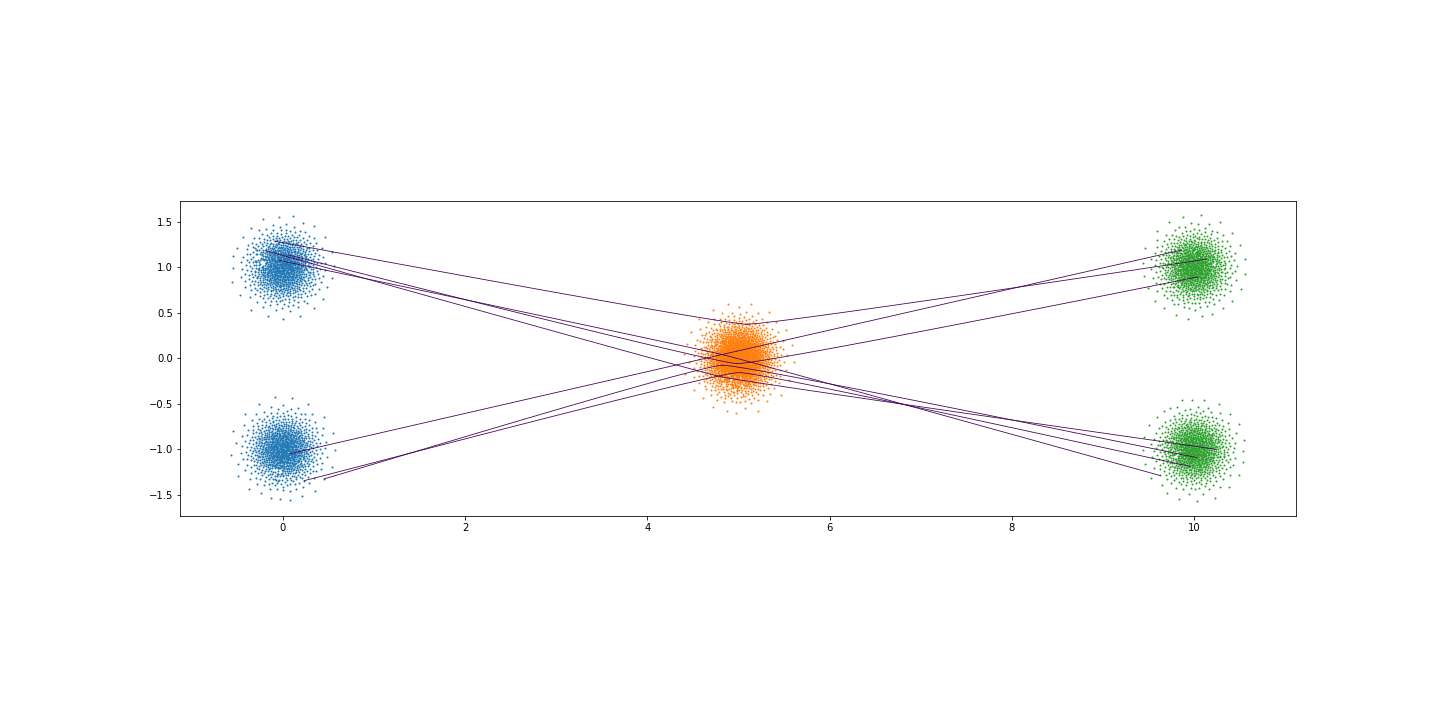} \\
\includegraphics[width=10cm]{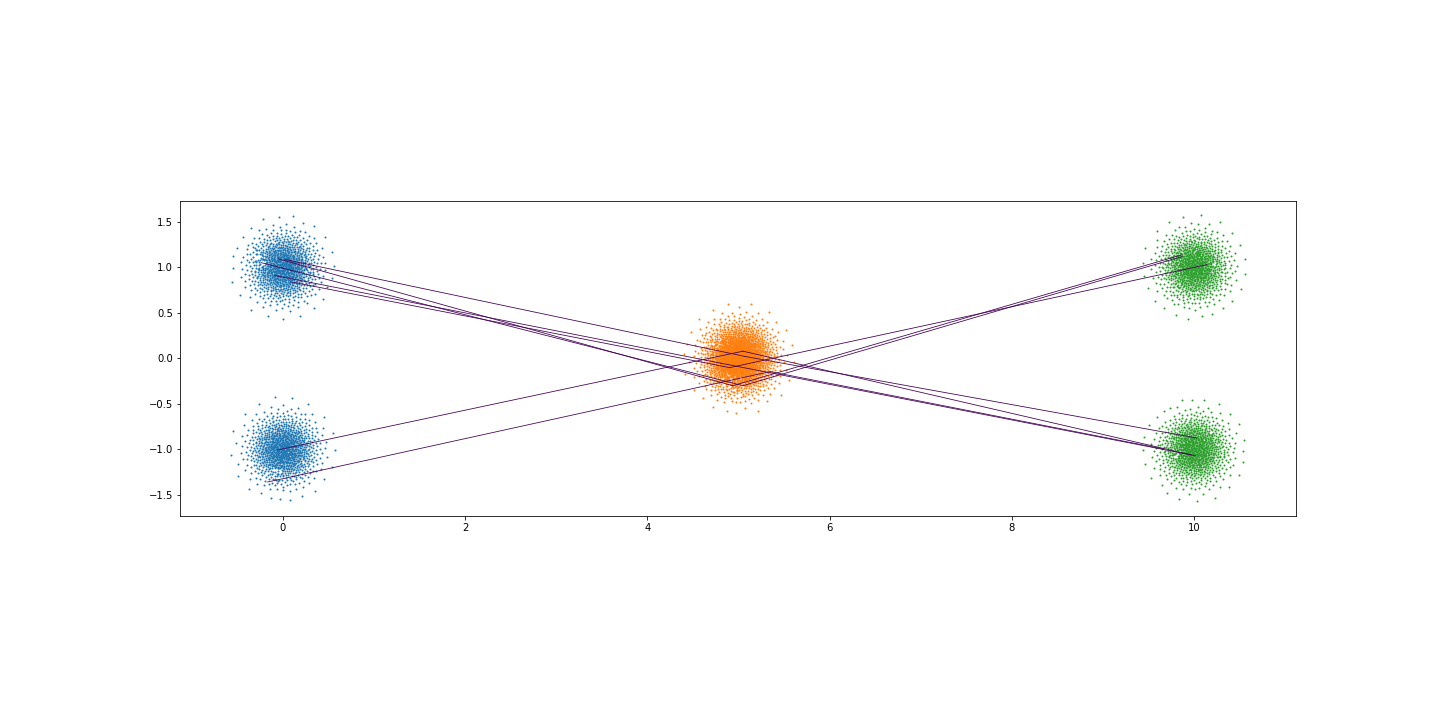} 

\end{tabular}

%

\caption{Spline interpolation for a mixture of gaussians with 2000 Dirac masses for each measure. $\epsilon=1000$.
Top Left: Initialization with a good coupling, total cost $=302$.
Top Right: Initialization with a quantization of the middle density and no speed, total cost $=804$ (local minima).
Bottom: Interpolation with the Wasserstein geodesic. $\epsilon=1000$, cost $=930$.
} 
\label{crossing}
\end{figure}  

Note that this spline approach is related to the problem of finding minimal geodesics along volume preserving maps done by M\'erigot and Mirebeau \cite{merigot2015minimal} : in their work the constraints $\rho_i$ are the Lebesgue measure, the cost is changed by the quadratic cost between two points and they have a coupling constraint.
 Therefore their minimization problem is also non convex but the coupling is given as a constraint so the non convexity issue didn't rise as clearly as in this spline problem.

\paragraph{\textbf{Image interpolation: }}
pour l'instant c'est pas presentable, ca passe vraiment au milieu. 
Je vais relancer dans la semaine mais je propose de faire une version sans.

\begin{remark}[Extrapolation]
The minimization of the acceleration can be used to provide time extrapolation of Wasserstein geodesic in a natural way: particles follow straight lines. This can be implemented in a $3$-marginal problem with the acceleration cost $c(x_1,x_2,x_3) = \frac{1}{\lambda^2}|x_3 - 2x_2 + x_1|^2 + \frac{1}{\lambda}|x_2 - x_1|^2$ under marginal constraints at time $1$ and $2$.
Note that, in the spline model, the formulation we proposed does not prevent particles from crossing each other. They are completely independent. Therefore, the particles following simply geodesic lines and after a shock, the evolution is not geodesic in the Wasserstein sense (since shocks do not occur but at initial and final times). The implementation of time extrapolation using entropic regularization is straightforward. Figures \ref{Extrapolations1} and \ref{Extrapolations2} show some experiments on $[0,1]$ discretized with $100$ points and $\varepsilon = 0.015$. The translation experiment recovers what is expected however the effect of the diffusion can be seen with a twice larger $\varepsilon$.
\begin{figure}
 \centering	
  \begin{tabular}[h]{cc}
\hspace{0cm}\includegraphics[width=8cm]{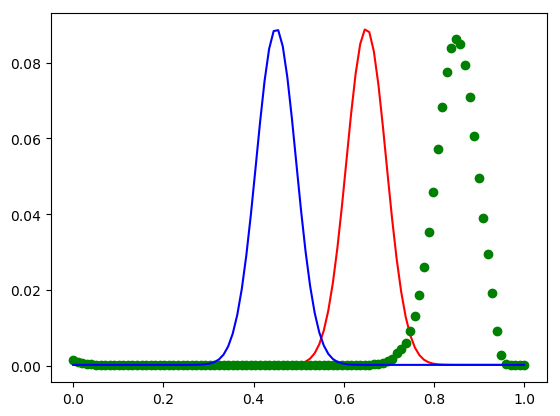}
&
\hspace{0cm}\includegraphics[width=8cm]{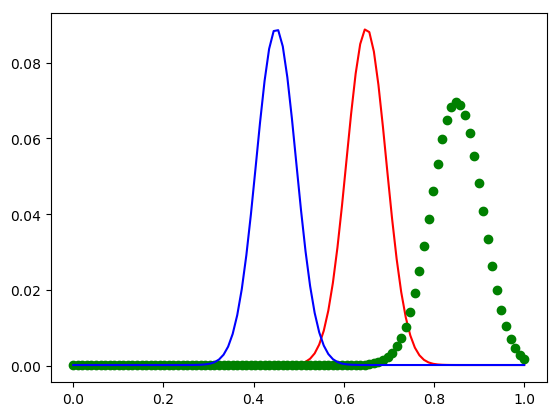}
\end{tabular}
\caption{Extrapolation of a translation with two different $\varepsilon = 0.015$ and $\varepsilon = 0.03$} 
\label{Extrapolations1}
\end{figure}
We also show two other simulations, one is a splitting simulation and the last one is a merging of two "bumps" into a single one. The extrapolation shows an other bimodal distribution which is explained by particle crossings.
\begin{figure}
 \centering	
  \begin{tabular}[h]{cc}
\hspace{0cm}\includegraphics[width=8cm]{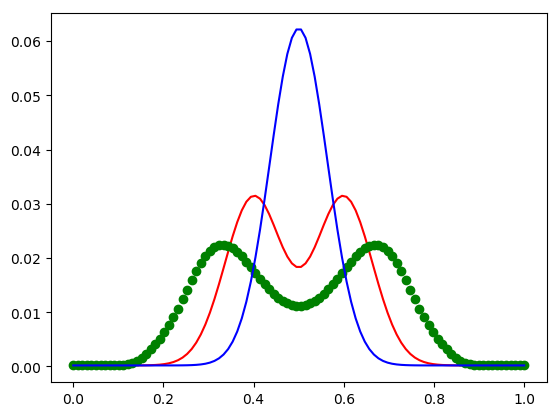}
&
\hspace{0cm}\includegraphics[width=8cm]{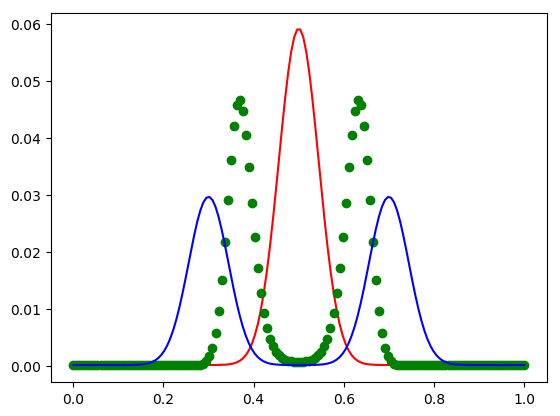}
\end{tabular}
\caption{On the left, a splitting experiment and on the right, a merging experiment.} 
\label{Extrapolations2}
\end{figure}
Note that this extrapolation scheme may proven useful in the development of higher-order schemes for the JKO algorithm.
\end{remark}

\section{Perspectives}

In this paper, we presented natural approaches to define cubic splines on the space of probability measures. We have presented a Monge formulation and its Kantorovich relaxation on the path space as well as their corresponding reduction on minimal cubic spline interpolation. We leave for future work theoretical questions such as the study of conditions under which the existence of a Monge map as a minimizer occurs, as well as the relaxation of cubic spline in the Wasserstein metric.
Our main contributions focus on the numerical feasibility of the minimization of the acceleration on the path space with marginal constraints. We have developed the entropic regularization scheme for the acceleration and shown simulations in 1D and 2D. Future work will address the 3D case which is out of reach with the methods presented in the first sections of this paper but possibly tackled with the semi-discrete method presented en Section \ref{SD}. In a similar direction, the application of this approach to the unbalanced case in the spirit of \cite{GeneralizedOT1} seems challenging due to the this dimensionality constraint and could be achieved within the semi-discrete setting.

In the Lagrangian setting, i.e. semi-discrete method, the extrapolation of a Wasserstein geodesic between $\rho_0$ and $\rho_1$ is obtained using three positions with the following formulation : let 
 $$\mathcal{Q}^N = \left\{ \sum^N_{j=1}   \frac1N  \delta_{\left(X^1_j,X^2_j,X^3_j\right) } \middle|  (X_j)_{j=1,\ldots,N}\in M^{n} \right\},$$ then 
\begin{equation}\label{SDVextr}
(SDextra) = \min_{\mathcal{Q}^N} \,\frac{1}{N}\sum_{j=1}^N \frac{d^2}{2}(X^1_j,X^2_j) + \frac{1}{N}\sum_{j=1}^N c(X^1_j,X^2_j,X^3_j) + \sum^2_{i=1} \frac{1}{2\epsilon^2} W_2^2\left(\sum^N_{j=1}   \frac1N  \delta_{X^i_j} , \rho_i \right),
\end{equation}
where $d$ is the distance on $M$ and $c(X^1_j,X^2_j,X^3_j)$ the cost of the cubic spline. 
In particular this formulation forces the curve to be a Wasserstein geodesic between $\rho_1$ and $\rho_2$, using the quadratic cost, and let free the final marginal. The implementation is completely similar as in Section \ref{SD} and the trajectory of each dirac masses is a straight line.

\appendix
\section{Proof of Theorem \ref{ThmRelaxation}}\label{Appendix}
The proof is a rewriting of the proof of \cite[Theorem 1.33]{santambrogio2015optimal}  when the initial and final spaces do not have the same dimension.
In particular we prove that transport plans concentrated on a graph of a map $T : \R^d \to  \R^p $ are dense into transport plans in $\R^d \times \R^p$ and deduce, taking $p= (n-1)d$, that for any continuous cost the multimarginal Kantorovich problem is the relaxation of the multimarginal Monge problem.

\begin{theorem}\label{relaxationmultimarginal}
Let $M=\R^d$ and $c:M^n \to \R$ be a continuous cost fonction. Let $(\rho_i)_{i\in {1,\ldots,n}}$ be $n$ probability measures on $M$. We define the Monge Problem $(M_c)$ as 
$$
(M_c)= \inf{  \int_{M}  c \left(x,T_2(x),\ldots , T_n(x)\right) \rho_1\,,} 
$$
over the set of map $\Pi_T = \left\{ T: M \to M^{n-1}, \, x \mapsto \left( T_i(x) \right)_{i=2,\ldots,n} \middle| \left(T_i \right)_*(\rho_1) = \rho_i, \, , i =2,\ldots,n\right\}$.
The Kantorovich problem $(K_c)$ is defined by 
$$
(K_c)= \inf{  \int_{M^n}  c \left(x_1,\ldots,x_n \right) \pi\left(x_1,\ldots,x_n \right) \,,} 
$$
over the set of plan $\Pi = \left\{ \pi \in \mathcal{P}(M^n) \middle|   (p_i)_*(\pi) = \rho_i, \, i =1,\ldots,n \right\}$, 
where $p_i$ is the projection of the $i^{\text{th}}$ factor. Then, if all $(\rho_i)_{i\in {1,\ldots,n}}$ have compact support and $\rho_1$ is atomless there holds $(M_c)=(K_c)$.
\end{theorem}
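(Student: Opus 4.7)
The inequality $(M_c) \geq (K_c)$ is immediate, since to any admissible map $T = (T_2, \ldots, T_n)$ we associate the plan $\pi_T = (\mathrm{Id}, T_2, \ldots, T_n)_* \rho_1 \in \Pi$ with the same cost. The substance lies in the reverse inequality $(M_c) \leq (K_c)$, which I would establish by approximating any admissible $\pi \in \Pi$ by the graph of a Monge map at arbitrarily small loss in the cost. The key observation, following Santambrogio's argument for two marginals, is to view $\pi$ as a plan on $\mathbb{R}^d \times \mathbb{R}^{(n-1)d}$ with first marginal $\rho_1$ and second marginal $\mu := (p_2, \ldots, p_n)_* \pi$; any measurable $T \colon \mathbb{R}^d \to \mathbb{R}^{(n-1)d}$ with $T_* \rho_1 = \mu$ automatically satisfies the multi-marginal constraints $(T_j)_* \rho_1 = \rho_j$ for $j = 2, \ldots, n$, so the problem reduces to the two-marginal setting albeit with non-matching dimensions.

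Fix $\varepsilon > 0$. First, partition the compact set $\mathrm{supp}(\rho_1)$ into finitely many Borel pieces $A_1, \ldots, A_N$ each of diameter less than $\varepsilon$, and pick a base point $x_i^\star \in A_i$. For each $i$, let $\pi_i = \pi|_{A_i \times M^{n-1}}$, whose first marginal is the atomless measure $\rho_1|_{A_i}$ of mass $m_i := \rho_1(A_i)$, and whose second marginal $\mu_i$ is a Borel measure on $M^{n-1}$ of the same total mass. By the classical fact that every atomless Borel probability space on a Polish space is measure-isomorphic to $([0,1], \mathrm{Leb})$, there exists a measurable $T_i \colon A_i \to M^{n-1}$ with $(T_i)_*(\rho_1|_{A_i}) = \mu_i$. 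Gluing these into $T^\varepsilon \colon \mathrm{supp}(\rho_1) \to M^{n-1}$ defined by $T^\varepsilon|_{A_i} = T_i$ yields $T^\varepsilon_* \rho_1 = \sum_i \mu_i = \mu$, so $T^\varepsilon$ is admissible in $(M_c)$.

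To compare costs, I would exploit uniform continuity of $c$ on the compact set $\mathrm{supp}(\rho_1) \times \prod_{j=2}^n \mathrm{supp}(\rho_j)$, yielding a modulus $\omega$ with $\omega(\varepsilon) \to 0$ such that $|c(x, y) - c(x_i^\star, y)| \leq \omega(\varepsilon)$ whenever $x \in A_i$ and $y$ lies in the relevant supports. Both $\int_{A_i} c(x, T_i(x))\, d\rho_1(x)$ and $\int_{A_i \times M^{n-1}} c(x, y)\, d\pi_i(x, y)$ then differ from $\int_{M^{n-1}} c(x_i^\star, y)\, d\mu_i(y)$ by at most $m_i\, \omega(\varepsilon)$, so summing over $i$ gives
\begin{equation*}
\left| \int c(x, T^\varepsilon(x))\, d\rho_1(x) - \int c\, d\pi \right| \leq 2\omega(\varepsilon).
\end{equation*}
Hence $(M_c) \leq \int c\, d\pi + 2\omega(\varepsilon)$ for every $\pi \in \Pi$ and every $\varepsilon > 0$, which yields $(M_c) \leq (K_c)$ upon letting $\varepsilon \to 0$ and optimizing over $\pi$. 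The main technical step is the per-cell construction of $T_i$: beyond the isomorphism theorem invoked above, one can alternatively disintegrate $\pi_i$ along its first marginal to obtain a family $\{\pi_i^x\}_{x \in A_i}$ of probability measures on $M^{n-1}$ and then apply a measurable selection argument. Atomlessness of $\rho_1$ is essential here, whereas the other marginals $\rho_j$ need not be atomless; compactness of the supports enters only through the uniform-continuity estimate on $c$.
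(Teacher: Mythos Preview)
Your proof is correct and follows essentially the same route as the paper: reduce to a two-marginal problem on $\R^d\times\R^{(n-1)d}$, partition the support of $\rho_1$ into small cells, build a per-cell transport map via Borel isomorphism (the paper isolates this as a separate lemma), and glue. The only cosmetic difference is that you compare costs directly through the uniform continuity of $c$, whereas the paper first packages the construction as a weak-density statement for Monge plans and then invokes continuity; both arguments rest on the same partition-and-transport idea.
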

In order to prove Theorem \ref{relaxationmultimarginal} we first remark that \cite[Corrollary 1.29 and Theorem 1.32 ]{santambrogio2015optimal} have their multimarginal counterpart. 

\begin{lemma}\label{transportblock}
Let $\mu \in \mathcal{P}(\R^d)$ be atomless measure and $\nu \in \mathcal{P}(\R^p)$, then there exists a transport map $T: \R^d \to \R^p$ such that $T_* \mu =\nu$.
\end{lemma}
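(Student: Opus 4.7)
The plan is to factor the problem through the standard probability space $([0,1],\mathrm{Leb})$. More precisely, I will construct two measurable maps $S\colon\R^d\to[0,1]$ and $R\colon[0,1]\to\R^p$ with $S_*\mu=\mathrm{Leb}|_{[0,1]}$ and $R_*\mathrm{Leb}|_{[0,1]}=\nu$, and then set $T=R\circ S$; a direct check on Borel sets $B\subset\R^p$ gives
\[
T_*\mu(B)=\mu(S^{-1}(R^{-1}(B)))=\mathrm{Leb}(R^{-1}(B))=\nu(B).
\]

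First step (constructing $S$). Since $\mu$ is an atomless Borel probability measure on the Polish space $\R^d$, the measure space $(\R^d,\mathcal{B}(\R^d),\mu)$ is a standard atomless probability space, hence measurably isomorphic mod-null to $([0,1],\mathrm{Leb})$. Concretely, one can build $S$ by iterated conditional cumulative distribution functions: let $S_1(x_1,\dots,x_d)=F_{\mu_1}(x_1)$ where $\mu_1$ is the first marginal of $\mu$; conditionally on the first coordinate, apply the CDF of the second marginal, and so on. Because $\mu$ is atomless, each conditional CDF can be arranged to be atomless (disintegrating first on the atomic part of a given coordinate if necessary), producing a single measurable map $S\colon\R^d\to[0,1]$ pushing $\mu$ onto $\mathrm{Leb}|_{[0,1]}$. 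This is exactly the content of the multi-dimensional version of Corollary~1.29 in \cite{santambrogio2015optimal}.

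Second step (constructing $R$). For any Borel probability measure $\nu$ on $\R^p$, one builds a measurable map $R\colon[0,1]\to\R^p$ with $R_*\mathrm{Leb}=\nu$ by the classical Skorokhod / inverse-CDF construction in several coordinates. A convenient realisation is to choose a Borel isomorphism $\iota\colon[0,1]\to[0,1]^p$ sending $\mathrm{Leb}|_{[0,1]}$ to the Lebesgue measure on $[0,1]^p$ (for instance via binary expansions and interleaving of digits), and then on the product side disintegrate $\nu=\nu_1(\mathrm{d}y_1)\,\nu_{2|1}(\mathrm{d}y_2|y_1)\cdots\nu_{p|1,\dots,p-1}(\mathrm{d}y_p|y_1,\dots,y_{p-1})$; applying the generalised inverse CDFs coordinatewise yields a measurable map $[0,1]^p\to\R^p$ pushing Lebesgue to $\nu$. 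Composing with $\iota$ gives $R$.

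The main conceptual point, rather than any hard obstacle, is making sure the iterated-disintegration construction of $S$ really produces an atomless image on $[0,1]$; this is handled by the atomlessness assumption on $\mu$, which propagates through the conditional CDFs. Once $S$ and $R$ exist, the composition $T=R\circ S$ is the desired transport map, and no regularity beyond Borel measurability is needed.
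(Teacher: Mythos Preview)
Your high-level strategy---factor the problem through a one-dimensional standard space and compose two pushforwards---is exactly the paper's approach, and the abstract isomorphism theorem you invoke (atomless standard probability spaces are measure-isomorphic to $([0,1],\mathrm{Leb})$) is correct and suffices. So the argument, as stated at that level, is fine.

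However, the ``concrete'' construction you offer for $S$ via iterated conditional CDFs is not right. First, iterated conditional CDFs land in $[0,1]^d$, not $[0,1]$, so you would still need a further interleaving step you only mention for $R$. More seriously, atomlessness of $\mu$ does \emph{not} propagate to conditional distributions: if $\mu$ is the uniform measure on the diagonal of $[0,1]^2$, the first marginal is atomless but the conditional law of $x_2$ given $x_1$ is a Dirac, so the second conditional CDF is constant and the composed map collapses. Your parenthetical about ``disintegrating first on the atomic part of a given coordinate'' does not repair this. You are saved only because you also quote the abstract isomorphism result; the explicit recipe should be dropped or replaced.

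The paper's execution is cleaner and avoids all of this: pick injective Borel maps with Borel inverse $\sigma_d:\R^d\to\R$ and $\sigma_p:\R^p\to\R$ (this is \cite[Lemma~1.28]{santambrogio2015optimal}); since $\sigma_d$ is injective, $(\sigma_d)_*\mu$ is atomless on $\R$, so the one-dimensional monotone (optimal) transport $t:\R\to\R$ from $(\sigma_d)_*\mu$ to $(\sigma_p)_*\nu$ exists; then $T=\sigma_p^{-1}\circ t\circ\sigma_d$ does the job. No disintegration, no conditional CDFs, and atomlessness is preserved in one line because injective Borel images of atomless measures are atomless.
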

\begin{proof}[Proof of Lemma \ref{transportblock}]
Let  $\sigma_d : \R^d \to \R$ (resp $\sigma_p : \R^p \to \R$) be an injective Borel map with Borel inverse (see \cite[Lemma 1.28]{santambrogio2015optimal} for instance for a very simple proof of existence in this case).
Since $\mu$ is atomless $({\sigma_d})_*\mu$ is also atomless. Let $t: \R \to \R$ be the optimal transport map from $({\sigma_d})_*\mu$ to $ ({\sigma_{p}})_*\nu$ for the quadratic cost. 
 $t_*\left( ({\sigma_d})_*\mu \right) =  \left( {\sigma_{p}}\right)_*\nu $. Thus $T= \sigma_{p}^{-1} \circ t \circ \sigma_d$ is a map pushing forward $\mu$ to $\nu$.  
\end{proof}

\begin{theorem}\label{densitemultimarginal}
With the notation of Theorem \ref{relaxationmultimarginal}, if the support of all $\rho_i$ are included in a compact domain then the set of plans $\Pi_T$ induced by a transport is dense, for the weak topology, in the set of plans $\Pi$ whenever $\rho_1$ is atomless.
\end{theorem}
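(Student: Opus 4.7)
The plan is to extend the two-marginal density argument of \cite[Theorem 1.32]{santambrogio2015optimal} to $n$ marginals by regarding any $\pi \in \Pi$ as a coupling between $\rho_1$ on $\R^d$ and its joint marginal $\nu := (p_2,\ldots,p_n)_*\pi$ on $\R^{(n-1)d}$. The key observation is that any Borel map $T : \R^d \to \R^{(n-1)d}$ with $T_*\rho_1 = \nu$ automatically has components $T = (T_2,\ldots,T_n)$ satisfying $(T_i)_*\rho_1 = \rho_i$, so $(\on{Id},T)_*\rho_1$ belongs to $\Pi_T$. It therefore suffices to approximate $\pi$ weakly by plans of the form $(\on{Id},T_k)_*\rho_1$ with $(T_k)_*\rho_1 = \nu$.

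Given $\pi \in \Pi$ and an integer $k \geq 1$, I would partition the compact support of $\rho_1$ into Borel cells $\{A_i^k\}$ of diameter at most $1/k$, and similarly partition the compact support of $\nu$ into cells $\{B_j^k\}$ of diameter at most $1/k$. Set $m_{ij}^k := \pi(A_i^k \times B_j^k)$. Since $\rho_1$ is atomless, each $A_i^k$ can be further decomposed into disjoint Borel subsets $A_{ij}^k$ with $\rho_1(A_{ij}^k) = m_{ij}^k$ (this is the standard ``cutting'' property of atomless measures, established via the same Borel isomorphism trick underlying Lemma \ref{transportblock}). Let $\nu_{ij}^k$ be the second marginal of $\pi|_{A_i^k \times B_j^k}$, a measure of mass $m_{ij}^k$ supported in the closure of $B_j^k$. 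For each pair $(i,j)$ with $m_{ij}^k > 0$, Lemma \ref{transportblock} applied to the atomless probability $\rho_1|_{A_{ij}^k}/m_{ij}^k$ and the probability $\nu_{ij}^k/m_{ij}^k$ yields a Borel map $T_{ij}^k : A_{ij}^k \to \overline{B_j^k}$ pushing the former onto the latter. Gluing these maps produces a Borel map $T_k : \on{Supp}(\rho_1) \to \R^{(n-1)d}$ satisfying $(T_k)_*\rho_1 = \sum_{i,j}\nu_{ij}^k = \nu$, so $\pi_k := (\on{Id}, T_k)_*\rho_1 \in \Pi_T$.

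To prove $\pi_k \rightharpoonup \pi$, I would test against an arbitrary $f \in C_b(\R^d \times \R^{(n-1)d})$. Compactness of the supports together with uniform continuity of $f$ on a neighborhood of $\on{Supp}(\rho_1)\times\on{Supp}(\nu)$ ensures that, for any $\varepsilon > 0$ and $k$ sufficiently large, $|f(x,y) - f(x',y')| < \varepsilon$ whenever $x,x'$ lie in a common $A_i^k$ and $y,y'$ in a common $\overline{B_j^k}$. Writing
\[
\int f\,d\pi_k = \sum_{i,j}\int_{A_{ij}^k} f(x,T_k(x))\,d\rho_1(x), \qquad \int f\,d\pi = \sum_{i,j}\int_{A_i^k \times B_j^k} f\,d\pi,
\]
each cell contributes a quantity within $\varepsilon\, m_{ij}^k$ of the common value $f(\bar x_i,\bar y_j)\, m_{ij}^k$, so the two integrals differ by at most $\varepsilon$. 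Letting $k \to \infty$ and then $\varepsilon \to 0$ gives the weak convergence.

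The main obstacle is the measurable ``cutting'' step producing $\{A_{ij}^k\}$ with prescribed $\rho_1$-masses, together with the measurable gluing of the $T_{ij}^k$; both hinge on the atomlessness of $\rho_1$, which is precisely the hypothesis of the theorem. Once this measure-theoretic bookkeeping is in place, the remainder of the argument is routine and mirrors the two-marginal case.
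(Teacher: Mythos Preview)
Your argument is correct and follows the same overall blueprint as the paper: reduce to a two-marginal problem between $\rho_1$ on $\R^d$ and $\nu=(p_2,\dots,p_n)_*\pi$ on $\R^{(n-1)d}$, partition both factors into cells of small diameter, invoke Lemma~\ref{transportblock} on each piece, and conclude by a uniform-continuity estimate. The difference lies in how the approximating map is built on each source cell. You further subdivide every $A_i^k$ into pieces $A_{ij}^k$ of prescribed $\rho_1$-mass $m_{ij}^k$ and transport each piece into the corresponding target cell $\overline{B_j^k}$; this is what forces you to flag the ``cutting'' step as the main obstacle. The paper avoids that step entirely: on each source cell $K_{i,m}$ it simply applies Lemma~\ref{transportblock} once to push $\rho_1|_{K_{i,m}}$ onto the full conditional marginal $\nu_{i,m}:=(p_{\R^{(n-1)d}})_*(\pi|_{K_{i,m}\times\Omega_p})$. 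Because $(t_{i,m})_*(\rho_1|_{K_{i,m}})=\nu_{i,m}$, the induced plan automatically assigns the same mass as $\pi$ to every product cell $K_{i,m}\times L_{j,m}$, and weak convergence follows directly. So your version is a valid variant, but the paper's construction is a bit leaner in that no measurable subdivision of the source cells is needed.
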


\begin{remark}
Theorem \ref{densitemultimarginal} is in fact very general, one can consider M N be only Polish spaces for instance. 
Then there exists invertible Borel maps from M (resp N) to $[0,1]$. This is enough to obtain Lemma \ref{transportblock}. 
Then one just need to consider a uniformly small partition of $\Omega$ to prove the density Theorem \ref{densitemultimarginal}. 
\end{remark}
\begin{proof}[Proof of Theorem \ref{densitemultimarginal}]
Again the proof is based on \cite[Theorem 1.32]{santambrogio2015optimal}. 
In particular the strategy of the proof is to approach a transport plan by transport maps defined on small sets on which the measure is preserved.

We consider a compact domain $\Omega = \Omega_d \times \Omega_p \in (\R^d \times \R^p)$ and $\pi \in \mathcal{P}(\Omega_d \times \Omega_p)$ such that  $(p_{\R^d})_*(\pi)=\mu$ is atomless. 
For any $m$ set a partition of $\Omega_p $ (resp $\Omega_q$) into (disjoint) sets $K_{i,m}$ (resp $L_{j,m}$) with diameter smaller than $1/2m$. 
Then $C_{i,j,m} = K_{i,m}\times L_{j,m} $ is a partition of $\Omega$ into sets with diameter smaller than $1/m$. 
Let $\pi_{i,m}$ be the restriction of $\pi$ on $K_{i,m}\times \Omega_p$ and $\mu_{i,m} = (p_{\R^d})_*(\pi_{i,m})$ and $\nu_{i,m} = (p_{\R^d})_*(\pi_{i,m})$.
Since $\mu$ is atomless $\mu_{i,m}=\mu_{|K_{i,m}} $ is also atomless and thanks to Lemma \ref{transportblock} there exists $t_{i,m}$ such that $(t_{i,m})_* \mu_{i,m}=\nu_{i,m}$.
 By definition 
 \begin{equation}\label{martingale}
 \pi[C_{i,j,m}]=\pi_{i,m}[C_{i,j,m}]= \mu_{i,m}[K_{i,j}]\nu_{i,m}[L_{j,m}]= (\rm{Id},t_{i,m})_*(\mu_{i,m})([C_{i,j,m}])=(\rm {Id},t_{m})_*(\mu)[C_{i,j,m}],
 \end{equation} 
where $t_m$ is define on $\Omega$ by $ t_{|K_{i,m}}=t_{i,m}$. In particular $(t_m)_*(\mu)=\nu$. Equation \eqref{martingale} and the definition of the partition sets $C_{i,j,m}$ implies that $(\rm{Id},t_{m})_*(\mu)$
weakly converges toward $\pi$ as $m  +\infty$ (they give same masses to any set of the partition). See [Theorem 1.31]{santambrogio2015optimal} for instance. 
To finish the proof let us remark that we can set $p=d(n-1)$ then $\mu=\rho_1$ is atomless and $t_m:$ $ \R^d \to \R^{d(n-1)}$ defines $(t_{2,n},...,t_{n,m})$.
\end{proof}

\begin{proof}[Proof of Theorem \ref{relaxationmultimarginal} ]
The continuity of the cost $c$ and the density Theorem \ref{densitemultimarginal} implies that $(K_c)\leq (M_c)$.
Since the converse is always true we have  $(M_c)= (K_c)$. 
\end{proof}
\begin{remark}
Theorem \ref{ThmRelaxation} is a consequence of Theorem \ref{Appendix} since both the Monge and the Kantorovich (Definition \ref{Mongeformulation} and \ref{ThmKantorovich}) problems reduces on $M^n$ with the spline cost which is continuous (see Corollary \ref{ThmMultiMarginal} and \ref{ThmMultiMarginalmonge}.
\end{remark}

\section{Entropic Regularisation and Sinkhorn}\label{App2}

\subsection{ Entropic regularization and Sinkhorn algorithm}

The  linear programming problems (\ref{JND}-\ref{JCDk})  is extremely costly to solve numerically and a natural strategy, which has received a lot of attention recently
following the pionneering works of \cite{galichon} and \cite{Cut}  is to approximate these problems by  strictly convex ones by adding an entropic penalization.  
It has been used with good results on a number of multi-marginal optimal transport problems  \cite{Ben}  \cite{benamoudft} \cite{benamoueuler}.  
Here is a rapid and simplified description, see the references above for more details. \\

The regularized  problem is 

\begin{equation}
\label{JNDe} 
\min_{ T^\epsilon}  \sum_{a,b}   \{  C_{a,b}  \, T^\epsilon_{a,b} + \epsilon  \, T^\epsilon_{a,b} \,  \log(T^\epsilon_{a,b}) \} 
\end{equation}
It is strictly convex. Denoting $u^k_{\alpha_{j_k}, \beta_{j_k}}$ the Lagrange multipliers of the k constraints (\ref{JCDk}), we obtain 
the optimality conditions: 
\begin{equation}
\label{F} 
T^\epsilon_{a,b}  =  K_{a,b}  \, \Pi_{k =1}^N U^k_{j_k}  
\end{equation}
where 
\[   
U^k_{j_k} = e^{ \frac{1}{\epsilon} u^k_{\alpha_{j_k}, \beta_{j_k}} } \quad \quad   K_{a,b}   = e^{  - \frac{1}{\epsilon}   C_{a,b}  }
\]
Equation (\ref{F}) caracterize the optimal tensor as a scaling of the Kernel $K$ depending on the dual unknown $U^k$. 
Inserting this factorization into the constrains (\ref{JCDk}) the dual problem takes the form of the set of equations ( $\forall k  \in [1,n]  $) 
 
\begin{equation}
\label{Dual}  
U^k_{j_k}  =  \rho_{j_k} ( x_{\alpha_{j_k}, \beta_{j_k}} )    (  \sum_{a \setminus \{\alpha_{j_k} \}  , \, b \setminus \{\beta_{j_k} \}  } 
    K_{a,b}  \, \Pi_{ k' \in \{1,..n\}      \setminus k }  \, U^{k'}_{j_{k'}}      )^{-1}    
\end{equation}

Sinkhorn algorithm simply amounts to perform a Gauss-Seidel type iterative resolution of the system (\ref{Dual}) and therefore
consists in computing the sums on the right-hand side and then perform the (grid) point wise division.  

\subsection{Implementation} 

In dimension 2, each unknown $U_k$ has dimension $N_x^2$,  the cost of one full Gauss Seidel cycle, i.e. on Sinkhorn iteration on all unknowns,  will therefore be $n \times N_x^2 \times$ the cost
to compute the tensor matrix products in the denominator of (\ref{Dual}).  Remember that $n$ is the number of time steps with constraints 
and $N$ the total number of time steps.  The given tensor Kernel $K_{a,b}$ is a priori a large 
$N \times N_x \times N_x$ tensor with indices   $ {a,b}  = {\alpha_1,..\alpha_N, \beta_1,..,\beta_N}$. It can however advantageously 
be tensorized both  along dimensions and also margins.  
First, using (\ref{acost}-\ref{acostD}) we see that the Kernel is the product of smaller  tensors
\[
K_{a,b} = \Pi_{i=1,N-1}   K^0_{i-1,i,i+1}  ,   \mbox{ with }     K^0_{i-1,i,i+1}  :=  e^{  - \frac{1}{\epsilon \,d\tau^3 }   \| x_{\alpha_{i+1},\beta_{i+1} }+ x_{\alpha_{i-1},\beta_{i-1} } -2 \, x_{\alpha_{i},\beta_{i} } \|^2 } .
\]
Moreover as we chose to work on a cartesian grid at all time steps, $K^0$ tensorize again into 
\[
 K^0_{i-1,i,i+1} =  K^\alpha_{i-1,i,i+1} \,  K^\beta_{i-1,i,i+1}    \mbox{ with }  
  K^\alpha_{i-1,i,i+1}  := e^{  - \frac{h^2}{\epsilon \,d\tau^3 }   \| \alpha_{i+1} + \alpha_{i-1} -2 \alpha_{i}  \|^2 }  
 \]
 Finally our large kernel $K_{a,b}$ can be represented a the product of  $2\,(N-2)$ identical tensors of size $N_x \times N_x \times N_x$.  
Assuming a cubic cost $n^3$  for the multiplication of two $(n \times n)$  matrix, we see oru algorithm is  of order $O(N \, N_x^4)$ in dimension 2.


\bibliographystyle{plain}
\bibliography{references,MesPapiers,bibli_jd}   

\end{document}